\def\calP{\mathcal{P}}
\def\calD{\mathcal{D}}
\def\psq{\boxminus}
\def\ra{\rightarrow}
\def \R {\mathbb R}
\def\al{\alpha}
\def\be{\beta}
\def\de{\delta}
\def\ga{\gamma}
\def\si{\sigma}
\def\de{{\delta}}
\def\om{\omega}
\def\Om{\Omega}
\def\mabg{\mu^{(\alpha; \beta, \gamma)}}
\def\Ph1{P^{(h_1)}}
\def\Ph2{P^{(h_2)}}
\def\b0{{\bf 0}}
\def\bnu{\bar\nu}
\def\1lbnu{{\bnu}_{\lambda_1}}
\def\2lbnu{{\bnu}_{\lambda_2}}
\newtheorem{thm}{Theorem}[section]
\newtheorem{prop}[thm]{Proposition}
\newtheorem{lem}[thm]{Lemma}
\newtheorem{cor}[thm]{Corollary}
\theoremstyle{plain}
\newtheorem{defn}[thm]{Definition}
\newtheorem{claim}[thm]{Claim}
\begin{document}

\title{BK-type inequalities and generalized random-cluster representations}

\author{J. van den Berg\footnote{CWI and VU University Amsterdam} 
and A. Gandolfi\footnote{Dipartimento di Matematica U. Dini, Univ. di Firenze} \\
{\footnotesize email: J.van.den.Berg@cwi.nl, gandolfi@math.unifi.it}
}

\date{}
\maketitle

\begin{abstract}
%
Recently, van den Berg and Jonasson gave the first substantial extension of the BK inequality
for non-product measures: they proved that, for $k$-out-of-$n$ measures, the probability that
two increasing events occur disjointly is at most the product of the two individual probabilities.

We show several other extensions and modifications of the BK inequality.
In particular, we prove that the antiferromagnetic Ising Curie-Weiss model satisfies the BK
inequality for all increasing events. We prove that this also holds for the Curie-Weiss model with
three-body interactions under the so-called Negative Lattice Condition.
For the ferromagnetic
Ising model we show that the probability that two events occur `cluster-disjointly' is at most the product
of the two individual probabilities, and we give a more abstract form of this result for arbitrary Gibbs measures.

The above cases are derived from a general abstract theorem whose proof is based on an extension of the
Fortuin-Kasteleyn random-cluster representation for all probability distributions and on a `folding procedure' which
generalizes an argument of D. Reimer.

\end{abstract}
{\it Key words and phrases:} BK inequality, negative dependence, random-cluster representation. \\
{\it AMS subject classification:} 60C05, 60K35, 82B20.

\begin{section}{Introduction and statement of results}

\begin{subsection}{Definitions, background and overview} \label{sect-defs}
Before we state and discuss results in the literature that are needed in, or partly motivated, our current work, 
we introduce the main definitions and notation:
Let $S$ be a finite set and let $\Om$ denote the set $S^n$. This set will be our state space.
We will often use the notation $[n]$ for $\{1, \cdots,n\}$, the set of indices.
For $\om \in \Om$ and $K \subset [n]$, we define $\om_K$ as the `tuple' $(\om_i, i \in K)$.
We use the notation $[\om]_K$ for the set of all elements of $\Om$ that `agree with $\om$ on $K$'. More formally,
$$[\om]_K := \{\al \in \Om \, : \, \al_K = \om_K\}.$$

For $A, B \subset \Om$, $A \square B$ is defined as the event that $A$ and $B$ `occur disjointly'.
Formally, the definition is:

\begin{equation} \label{box-def}
A \square B = \{\om \in \Om \, : \, \exists \text{ disjoint } K, L \subset [n] \mbox{ s.t. } 
[\om]_K \subset A \text{ and } [\om]_L \subset B\}.
\end{equation}

For the case where $S$ is an ordered set and $\om$ and $\om' \in \Om$, we write $\om' \geq \om$
if $\om'_i \geq \om_i$ for all $i \in [n]$.
An event $A \subset \Om$ is said to be increasing if $\om' \in A$ whenever $\om \in A$ and $\om' \geq \om$. \\

The following inequality, \eqref{bkr-eq} below,
was conjectured (and proved for the special case where $S = \{0,1\}$ and $A$ and $B$ are increasing events)
in \cite{BK85}. Some other special cases were proved in \cite{BF87} and \cite{T94}. The general case was proved
by D. Reimer (see \cite{R00}).

\begin{thm} \label{bkr}
For all $n$, all product measures $\mu$ on $S^n$,  and all $A, B \subset S^n$,
\begin{equation}\label{bkr-eq}
\mu(A \square B) \leq \mu(A) \mu(B).
\end{equation}
\end{thm}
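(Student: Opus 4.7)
My plan is to reduce the general product-measure statement to a combinatorial inequality on the Boolean cube with uniform measure, and then prove the latter via a coordinatewise folding argument.

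\textbf{Reduction to the Boolean uniform case.} Both sides of \eqref{bkr-eq} are polynomials in the marginal weights $\mu_i(s)$, so by continuity it suffices to treat product measures $\mu = \mu_1 \otimes \cdots \otimes \mu_n$ whose marginals are dyadic-rational. Each such $\mu_i$ can then be realized as the pushforward of the uniform measure on $\{0,1\}^{m_i}$ under some map $f_i$, and concatenating these maps coordinatewise yields a measure-preserving surjection $f : (\{0,1\}^N, \text{uniform}) \to (S^n, \mu)$ with $N = \sum_i m_i$. The crucial compatibility is that disjoint index sets $K, L \subset [n]$ lift to disjoint blocks of indices in $[N]$; hence $f^{-1}(A \square B) = f^{-1}(A) \, \square \, f^{-1}(B)$. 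It therefore suffices to prove the counting inequality
\[
2^N \, |\tilde A \square \tilde B| \leq |\tilde A| \cdot |\tilde B|
\]
for arbitrary $\tilde A, \tilde B \subset \{0,1\}^N$.

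\textbf{The folding argument.} I would prove the Boolean counting inequality by induction on $N$ via folding along a chosen coordinate $i$. For any $C \subset \{0,1\}^N$ split $C$ into slices $C_0, C_1 \subset \{0,1\}^{N-1}$ according to the value of coordinate $i$. The plan is to define a folded pair $(A', B')$ built from appropriate unions and intersections of the slices of $A$ and $B$---designed so that (i) $|A'| = |A|$ and $|B'| = |B|$, (ii) $|A' \square B'| \geq |A \square B|$, and (iii) in $(A', B')$ coordinate $i$ is effectively removed, reducing the problem to dimension $N-1$. Iterating over all coordinates eventually brings us to a base case where $\tilde A$ and $\tilde B$ are determined by disjoint sets of coordinates, for which independence makes the inequality immediate.

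\textbf{Main obstacle.} The delicate step is (ii): verifying that folding cannot destroy disjoint witnesses. Given $\om \in A \square B$ certified by disjoint $K, L \subset [N]$, one must exhibit a certified witness in $A' \square B'$, and the naive attempt fails because a witness pair $(K, L)$ for $\om$ need not remain valid after slices are combined. Handling this requires a case analysis on whether $i \in K$, $i \in L$, or $i \notin K \cup L$, and combining a slice of $A$ with the opposite slice of $B$ to create the new certificate---precisely the combinatorial insight that kept the conjecture open for a decade and a half. Once that folding inequality is established, the induction on $N$ completes the proof.
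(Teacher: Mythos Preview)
Your reduction to the Boolean uniform case is standard and correct (the claimed equality $f^{-1}(A \square B) = f^{-1}(A) \,\square\, f^{-1}(B)$ is in general only the inclusion $\subset$, but that is all you need for the bound).

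The real gap is in your ``folding argument.'' What you describe is a \emph{compression} scheme: replace $(A,B)$ by $(A',B')$ with the same cardinalities but $|A' \square B'| \ge |A \square B|$, iterate over coordinates, and hope to land in a trivial case. You correctly flag step (ii) as the obstacle and then do not resolve it---and in fact no compression of this simple form is known to work for arbitrary $A,B$; the van den Berg--Fiebig paper \cite{BF87} pushed such ideas and only obtained special cases. There is also no reason iterating a one-coordinate compression should terminate with $A$ and $B$ supported on disjoint coordinate sets: standard compressions drive sets toward monotonicity, not toward separated supports. So the plan, as stated, does not close.

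Reimer's actual proof, which the paper quotes as Proposition~\ref{reimer}, is different in kind: he proves the purely combinatorial inequality $|A \square B| \le |A \cap \bar B|$ on $\{0,1\}^N$ (via a butterfly construction and a rank argument, not a compression). The passage from this to the counting inequality $2^N\,|A \square B| \le |A|\,|B|$ is then the $W^{(\alpha;\beta,\gamma)}$ partition of $\{0,1\}^N \times \{0,1\}^N$ used in the proof of Theorem~\ref{bk-gen}: on each diagonal piece one applies Proposition~\ref{reimer} to the sliced events $A'$, $B'$ and obtains $|A' \square B'| \le |A' \cap \bar{B'}|$, which is exactly the required local inequality. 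Your phrase ``combining a slice of $A$ with the opposite slice of $B$'' is a gesture toward the $A \cap \bar B$ structure, but that structure belongs to this partition step, not to a compression, and it does not by itself yield Proposition~\ref{reimer}. In short, the proposal is missing precisely the piece where Reimer's contribution lives.
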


We also state the following result, Proposition \ref{reimer} below, which was Reimer's key ingredient (intermediate
result) in his
proof of Theorem \ref{bkr}, and which is also crucial in our work. 

To state this result, some more notation is needed:
For $\om = (\om_1, \cdots, \om_n) \in \{0,1\}^n$, we denote by $\bar\om$ the configuration
obtained from $\om$ by replacing $1$'s by $0$'s and vice versa:
$$\bar\om = (1-\om_1, \cdots, 1-\om_n).$$
Further, for $A \subset \Om$, we define
$\bar A \, = \, \{\bar\om \, : \, \om\in A\}$.
Finally, if $V$ is a finite set, $|V|$ denotes the number of elements of $V$.

\begin{prop}\label{reimer} {\em [Reimer \cite{R00}]} \\
For all $n$ and all $A, B \subset \{0,1\}^n$,
\begin{equation}\label{reimer-eq}
|A \square B| \leq |A \cap \bar B|.
\end{equation}
\end{prop}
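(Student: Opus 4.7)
The plan is to build an explicit injection $\phi : A \square B \to A \cap \bar B$, since such an injection immediately yields the cardinality bound.

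To motivate the construction, fix $\om \in A \square B$ with disjoint witnesses $K, L \subset [n]$, meaning $[\om]_K \subset A$ and $[\om]_L \subset B$. Observe that any $\eta \in \{0,1\}^n$ that agrees with $\om$ on $K$ and disagrees with $\om$ on $L$ automatically lies in $A \cap \bar B$: indeed $\eta \in [\om]_K \subset A$, and $\bar\eta$ agrees with $\om$ on $L$, so $\bar\eta \in [\om]_L \subset B$, i.e.\ $\eta \in \bar B$. The coordinates outside $K \cup L$ are free, so we have flexibility but must make canonical choices to get a well-defined map. The most natural candidate is to select the (say) lexicographically smallest pair $(K(\om),L(\om))$ of disjoint witnesses for $\om$ and define
\[
\phi(\om)_i = \begin{cases} \om_i, & i \in K(\om), \\ 1-\om_i, & i \in L(\om), \\ 0, & \text{otherwise.} \end{cases}
\]
By the observation above, $\phi(\om) \in A \cap \bar B$, so the codomain is correct.

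The main obstacle, and essentially all the content of the proposition, is verifying that $\phi$ is injective. A direct argument is not available, because the witness sets depend on $\om$ in a complicated way and distinct $\om$'s can be tangled together by the flipping on $L$. I would address this via a coordinate-by-coordinate compression: for each $i \in [n]$ define an operator $C_i$ on subsets of $\{0,1\}^n$ that, for each pair of configurations differing only in coordinate $i$, replaces any ``unmatched'' pair by a canonical version (say, the one with $i$-th coordinate equal to $0$). Such a $C_i$ preserves cardinality. The crux is to establish the two monotonicity relations
\[
|C_i(A) \square C_i(B)| \ge |A \square B|, \qquad |C_i(A) \cap \overline{C_i(B)}| \le |A \cap \bar B|,
\]
so that after compressing in every coordinate one is reduced to sets $A$ and $B$ with strong structural constraints (roughly, each set is determined by a canonical representative in every pair of adjacent configurations), at which point a direct check shows that the folding map $\phi$ is one-to-one. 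This coordinate-by-coordinate ``folding'' is the deep combinatorial heart of Reimer's argument, and I expect verification of the two displayed monotonicity inequalities --- especially the one involving $\square$ --- to be the truly hard step; indeed, the authors of the present paper explicitly state that their general theorem rests on an extension of exactly this folding procedure.
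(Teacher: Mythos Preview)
The paper does not prove Proposition~\ref{reimer}; it is quoted from \cite{R00} and invoked as a black box inside the proof of Lemma~\ref{cardinality-lem}. So there is no ``paper's own proof'' to compare against, only Reimer's original argument.

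Your sketch has a genuine gap. The observation that any $\eta$ agreeing with $\om$ on $K$ and disagreeing on $L$ lies in $A\cap\bar B$ is correct and classical, but the map $\phi$ you write down (pick lexicographically minimal witnesses, flip on $L$, zero out the rest) is not injective, as you yourself note. The remedy you propose --- coordinate-by-coordinate compression operators $C_i$ satisfying $|C_i(A)\,\square\,C_i(B)|\ge|A\,\square\,B|$ and $|C_i(A)\cap\overline{C_i(B)}|\le|A\cap\bar B|$ --- is essentially the strategy of \cite{BF87}, and it is known to succeed only for restricted classes of events. The first displayed inequality is exactly what fails for arbitrary $A,B$: the $\square$ operation does not behave monotonically under such compressions in general. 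This obstruction is precisely why the full conjecture remained open until Reimer's work.

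Reimer's actual proof is of a different character: it is a linear-algebra argument (the ``butterfly lemma''), showing that certain vectors attached to the witness cylinders $[\om]_{K_\om}$, $\om\in A\,\square\,B$, are linearly independent when restricted to $A\cap\bar B$. No explicit injection is constructed and no compression is used. You have also misread what the present paper calls ``folding'': Definition~\ref{def-folding} is a partition of $S^n\times S^n$ into diagonal pieces $W^{(\al;\be,\ga)}$ on which $\mu\times\mu$ factors conveniently. This generalizes the step by which Reimer \emph{applies} Proposition~\ref{reimer} to deduce Theorem~\ref{bkr} for product measures, not the step by which he \emph{proves} Proposition~\ref{reimer} itself.
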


\medskip
It is easy to see that if $\mu$ is a
non-product measure on $S^n$, it cannot satisfy \eqref{bkr-eq} for all events.
However, it seemed intuitively obvious that many measures on $\{0,1\}^n$ do satisfy this inequality for all
{\em increasing} events. Such measures are sometimes called BK measures. The first natural, non-trivial, non-product measure
which was proved to be BK is the so-called $k$-out-of-$n$ measure:

Let $k \leq n$ and let $\Om_{k,n}$ be the set of all
$\om \in \{0,1\}^n$ with exactly $k$ $1's$.
Let $P_{k,n}$ be the distribution on $\{0,1\}^n$ that assigns
equal probability to all $\om \in \Om_{k,n}$ and probability $0$ to all other elements of $\{0,1\}^n$.

\begin{thm} \label{bkj} {\em [van den Berg and Jonasson \cite{BJ11}]}
For all $n$, all $k \leq n$, and all increasing $A, B \subset \{0,1\}^n$,
\begin{equation}\label{bkj-eq}
P_{k,n}(A \square B) \leq P_{k,n}(A) P_{k,n}(B).
\end{equation}
\end{thm}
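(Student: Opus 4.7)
The plan is to reduce Theorem~\ref{bkj} to Reimer's inequality (Proposition~\ref{reimer}) via a combinatorial injection on slices. First I would identify the increasing events with up-sets of $2^{[n]}$: put $\mathcal{A} = \{S \subset [n] : 1_S \in A\}$ and $\mathcal{B} = \{S \subset [n] : 1_S \in B\}$. Under this identification $\omega \in A \cap \Omega_{k,n}$ iff $\text{supp}(\omega) \in \mathcal{A}$, and by increasingness $[\omega]_K \subset A$ iff $K \cap \text{supp}(\omega) \in \mathcal{A}$, so in the definition \eqref{box-def} one may restrict to witnesses contained in $\text{supp}(\omega)$. Hence for $\omega \in \Omega_{k,n}$, $\omega \in A \square B$ iff $\text{supp}(\omega)$ contains disjoint members $K \in \mathcal{A}$ and $L \in \mathcal{B}$. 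Multiplying \eqref{bkj-eq} through by $\binom{n}{k}^{2}$, the theorem reduces to the purely combinatorial inequality
$$\binom{n}{k}\cdot |(\mathcal{A} \square \mathcal{B})_{k}| \;\leq\; |\mathcal{A}_{k}|\cdot|\mathcal{B}_{k}|,$$
where a subscript $k$ denotes restriction to $k$-element members.

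The natural route is to construct an injection
$$\Phi \colon (\mathcal{A} \square \mathcal{B})_{k} \times \binom{[n]}{k} \hookrightarrow \mathcal{A}_{k} \times \mathcal{B}_{k}.$$
Given a pair $(T,U)$ in the domain, one fixes canonical disjoint witnesses $K(T) \in \mathcal{A}$ and $L(T) \in \mathcal{B}$ inside $T$ (for example, the lex-minimal pair, or the pair produced by a greedy peeling). The map $\Phi$ should then ``fold'' the auxiliary $k$-set $U$ along the partition $(K,\, L,\, T \setminus (K \cup L),\, [n] \setminus T)$ to produce two $k$-sets $T_{1} \supset K$ and $T_{2} \supset L$. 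Once the fold is defined, membership $T_{1} \in \mathcal{A}_{k}$ and $T_{2} \in \mathcal{B}_{k}$ is automatic, because $K \in \mathcal{A}$, $L \in \mathcal{B}$ and both families are up-sets.

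The heart of the argument, and what I expect to be the main obstacle, is the injectivity of $\Phi$. A naive definition (for instance, $T_{1} = K \cup (U \setminus K)$) fails either the size constraint $|T_{i}| = k$ or the reversibility needed for injectivity. The correct construction should be patterned on Reimer's mirror argument in \cite{R00}: from the image $(T_{1}, T_{2})$ one should read off canonical witnesses inside each of $T_{1}$ and $T_{2}$ and then recover $(T,U)$ from how those witnesses interact. The cleanest implementation I foresee is to package $(T,U)$ into a single configuration in the larger cube $\{0,1\}^{2n}$ (with one block of coordinates encoding $T$ and the other encoding $U$) on which Proposition~\ref{reimer} can be invoked directly, so that the slice-wise structure of the $k$-out-of-$n$ problem is converted into an instance of the global Reimer bound. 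Under this viewpoint, the factor $\binom{n}{k}$ on the left of the reduced inequality is precisely the ``room'' supplied by the free choice of $U$, and the chief technical difficulty is to carry out this packaging while preserving the disjoint-witness structure required by Reimer.
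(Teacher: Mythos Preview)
Your reduction to the counting inequality
\[
\binom{n}{k}\,\bigl|(\mathcal A\,\square\,\mathcal B)_k\bigr|\;\le\;|\mathcal A_k|\,|\mathcal B_k|
\]
is correct, and the instinct to prove it via Reimer and an auxiliary $k$-set $U$ is exactly right in spirit: this is the same starting point as \cite{BJ11} and as the paper's general machinery (see \eqref{bk-gen-eq-alt}). But the two concrete implementations you propose both fail, and you are missing the structural idea that makes the argument go through.

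First, the attempt to fix canonical witnesses $K(T),L(T)$ depending only on $T$ and then ``fold $U$ along the partition $(K,L,T\setminus(K\cup L),[n]\setminus T)$'' cannot yield an injection: once $K,L$ are frozen, there is no way to distribute the elements of $U$ into two $k$-sets $T_1\supset K$, $T_2\supset L$ in a reversible, size-preserving manner (try $n=4$, $k=2$). Second, the $\{0,1\}^{2n}$ packaging is a dead end: if the first $n$ coordinates encode $T$ and the last $n$ encode $U$, then the event ``$T\in(\mathcal A\,\square\,\mathcal B)_k$'' depends only on the first block, so any witnesses for it live entirely there, and Reimer's bound $|A'\cap\bar B'|$ gives nothing resembling $|\mathcal A_k|\cdot|\mathcal B_k|$.

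What actually works, and what the paper does (see Section~\ref{sect-foldings} and the proof of Theorem~\ref{bk-gen}), is to partition pairs $(T,U)\in\binom{[n]}{k}^2$ not by the witness structure of $T$ but by the \emph{agreement set} $M=\{i:1_T(i)=1_U(i)\}$ together with the common value $\alpha$ on $M$. On each such piece the second coordinate is determined by the first via $U_{M^c}=\overline{T_{M^c}}$, so one is reduced to a problem on $\{0,1\}^{M^c}$, and it is there---not in $\{0,1\}^{2n}$---that Proposition~\ref{reimer} is applied. The remaining (and nontrivial) point specific to $P_{k,n}$ is that the folded measure is uniform on the ``half--half'' slice of $\{0,1\}^{M^c}$, and one must further decompose it as a mixture over perfect matchings of $M^c$ into products of $1$-out-of-$2$ measures (this is the content of Corollary~\ref{cor-perm-bk} and Lemma~\ref{lem-lin-sol}; compare the Remark after Theorem~\ref{afcw-thm}). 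Only after this second decomposition do the $\eta$-clusters become singletons or pairs, which is what lets the selection rule and Reimer's inequality combine to give the bound. Your proposal contains neither the agreement-set partition nor the matching decomposition, and without them the injection you are after does not exist.
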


\noindent
{\bf Remark:}
{\em This result, which was conjectured in \cite{G94}, extends, 
as pointed out in \cite{BJ11}, to certain weighted versions of 
$P_{k,n}$ and to products of such measures.}

Theorem \ref{bkj} is one direction in which Theorem \ref{bkr} can be extended or generalized. In our
current work we prove some other natural cases, including the antiferromagnetic Curie-Weiss model (see Theorem \ref{afcw-thm}
and Theorem \ref{afcw-cubic-thm}), in this direction. However, we
also generalize Theorem \ref{bkr} in a very different sense, namely by modifying the
disjoint-occurrence operation. In particular we will show that the ferromagnetic Ising model satisfies
\eqref{bkr-eq} for the modification where the usual `disjoint-occurrence' notion is replaced by the stronger
notion of disjoint-spin-cluster occurrence (see Theorem \ref{fim-thm}). (A form of this result is also proved
for arbitrary Gibbs measures, see Theorem \ref{gibbs-thm}). As an example we derive an upper bound
for the probability of a certain four-arm event in terms of the one-arm probabilities (see Corollary \ref{ising-cor}).

All these results are stated in Section \ref{sect-newext}. Next, in Section \ref{sect-general} we state and prove
a very general result,
Theorem \ref{bk-gen}. This theorem involves the notion of `foldings' of a measure, which already plays (but only
within the class of product measures and therefore less explicitly) an important role in Reimer's work \cite{R00}.
This notion is defined in Section \ref{sect-foldings}. Theorem \ref{bk-gen} also involves a highly generalized form of
the Fortuin-Kasteleyn random-cluster
representation for all probability distributions, presented in Section \ref{sect-gen-rcr}. In very simplified and informal
terms, Theorem \ref{bk-gen} states that an inequality similar to \eqref{bkr-eq} holds whenever the events $A$
and $B$ and the probability distribution $\mu$ are such that if (a certain version of) $A \square B$ holds,
$A$ and $B$ can be `witnessed' by sets of indices that are not connected to each other in the random-cluster
configurations for the foldings of $\mu$. In Section \ref{gibbs-proofs} we investigate random-cluster
representations for Gibbs measures and show how our general result implies the above mentioned Theorem \ref{gibbs-thm}
and Theorem \ref{fim-thm}. Finally, in Section \ref{afcw-proof} we derive our theorems for the Curie-Weiss model
from the general result. 

We finish the current section by remarking that a very different kind of extension (namely, a `dual form') of Theorem \ref{bkr}, within the
class of product-measures, was obtained by Kahn, Saks and Smyth \cite{KSS11}.

\end{subsection}

\begin{subsection}{New extensions of Theorem \ref{bkr}} \label{sect-newext}
\begin{subsubsection}{Antiferromagnetic Ising Curie-Weiss model} \label{sect-acw}
In this model each pair of vertices has the same, antiferromagnetic, interaction. Moreover, each vertex
`feels' an external field (which may be different from that at the other vertices). 

More precisely, the Ising Curie-Weiss measure with vertices $\{1, \cdots, n\}$, interaction
parameter $J$ and external fields $h_1, \cdots, h_n$, is the distribution $\mu$ on
$\{-1, +1\}^n$ given by

\begin{equation}\label{afcw-def}
\mu(\om) = \frac{\exp\left(\sum_{i, j} J \om_i \om_j \, + \, \sum_i h_i \om_i\right)}{Z}, \,\,\, \om \in \{-1, +1\}^n,
\end{equation}
where (here and in similar expressions later)
$Z$ is a normalizing constant, the first sum is over all pairs $(i,j)$ with $1 \leq i < j \leq n$ and
the second sum is over all $i = 1, \cdots, n$.
If $J < 0$, the measure is called antiferromagnetic.
One of our main new results is that in that case it is a BK measure:

\begin{thm} \label{afcw-thm}
The Ising Curie-Weiss measure \eqref{afcw-def} with $J \leq 0$ satisfies
\begin{equation} \label{bk-afcw}
\mu(A \square B) \leq \mu(A) \, \mu(B),
\end{equation}
for all increasing $A, B \subset \{-1, +1\}^n$.
\end{thm}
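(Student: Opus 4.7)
The plan is to deduce Theorem \ref{afcw-thm} from the general abstract result (Theorem \ref{bk-gen}) stated later in the paper, by equipping the antiferromagnetic Curie--Weiss measure with an appropriate generalized Fortuin--Kasteleyn random-cluster representation and verifying the folding/disconnection hypothesis.

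First, I would recast \eqref{afcw-def} in pair-interaction form. Using $\sum_{i<j}\om_i\om_j = \tfrac12 M(\om)^2 - n/2$ with $M(\om) = \sum_i \om_i$, and absorbing the constant $-Jn/2$ into $Z$, the measure becomes
$$\mu(\om) \;\propto\; \prod_{1\le i<j\le n} e^{J\om_i\om_j}\,\prod_{i=1}^n e^{h_i\om_i},$$
a uniform antiferromagnetic pair interaction on the complete graph $K_n$. I would then apply the representation of Section \ref{sect-gen-rcr} to each edge factor via
$$e^{J\om_i\om_j} \;=\; e^{-J}\bigl[(1-p) \,+\, p\,\I[\om_i\ne\om_j]\bigr], \qquad p \;=\; 1 - e^{2J}\in[0,1),$$
which is a valid convex combination because $J\le 0$. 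Introducing independent edge indicators $\eta_{ij}\in\{0,1\}$, one obtains a coupled $(\om,\eta)$-distribution in which open edges ($\eta_{ij}=1$) enforce disagreement $\om_i\ne\om_j$. This is the random-cluster representation to which Theorem \ref{bk-gen} will be applied.

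Second, I would check that the foldings of $\mu$ preserve this structure. A folding amounts to flipping spins on some $T\subset[n]$; this replaces the coupling on an edge $(i,j)$ by $J$ if $|T\cap\{i,j\}|\in\{0,2\}$ and by $-J$ otherwise, and flips the external field on $T$. A random-cluster representation of the same type still exists for every folded measure, with each edge carrying either a disagreement or an agreement constraint depending on the sign of its coupling; the structural input Theorem \ref{bk-gen} demands is therefore available for the whole family of foldings.

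The third and principal step is to verify the disconnected-witnesses hypothesis of Theorem \ref{bk-gen}: whenever $A,B$ are increasing and $\om$ witnesses $A\square B$ via disjoint $K,L$, I must produce a folding whose random-cluster configuration separates $K$ from $L$. This is the place I expect the real work, and also the place where the sign $J\le 0$ is decisive: in the unfolded measure the open edges enforce spin disagreement, which is the mechanism that makes witnesses of increasing events reluctant to share an open-edge cluster. The uniformity of the coupling across $K_n$ (only the single-site fields $h_i$ break exchangeability) should allow the folding to be selected canonically from the pair $(K,L)$, in the spirit of Reimer's argument, so that the configuration pairing used in Proposition \ref{reimer} extends to the generalized random-cluster setting and yields \eqref{bk-afcw}. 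Producing this folding and the combinatorial matching in a way that is robust to arbitrary fields $h_1,\dots,h_n$ is the main obstacle; the mean-field, exchangeable-apart-from-fields structure of the Curie--Weiss model is what gives me hope that the obstacle is surmountable without any ad hoc constructions.
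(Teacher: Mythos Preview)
Your proposal has a genuine gap at the two points you yourself flag as the main work, and an earlier misconception about foldings that compounds it.

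\textbf{Foldings.} A folding in the sense of Definition~\ref{def-folding} is \emph{not} a spin flip on a subset $T$; it is the measure $\mu^{(\al)}(\om)\propto \mu(\al\circ\om)\,\mu(\al\circ\bar\om)$ on $\{-1,+1\}^{M^c}$. For the Curie--Weiss measure one computes directly (the $h_i$-terms and the $M$--$M^c$ cross terms cancel) that
\[
\mu^{(\al)}(\om)\ \propto\ \exp\Bigl(2J\sum_{\{i,j\}\subset M^c}\om_i\om_j\Bigr),
\]
i.e.\ the folding is again an antiferromagnetic Curie--Weiss measure on $M^c$, with coupling $2J\le 0$ and \emph{zero} external field. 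There are no mixed-sign couplings, contrary to what you describe in your second step.

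\textbf{The Bernoulli anti-FK base fails Condition~(ii).} Even with the correct folding, your proposed representation $e^{2J\om_i\om_j}=e^{-2J}[(1-p)+p\,\I(\om_i\ne\om_j)]$ with independent Bernoulli($p$) edges does not give the separation required by Theorem~\ref{bk-gen}. Take $\om$ with witnesses $K,L$ (so $\om\equiv +1$ on $K\cup L$) and any vertex $v\in M^c\setminus(K\cup L)$ with $\om_v=-1$. Then for $k\in K$, $l\in L$ the edges $\{k,v\}$ and $\{v,l\}$ are both compatible with $\om$ and can be simultaneously open with positive $\nu^{(\al)}$-probability, putting $k$ and $l$ in the same $\eta$-cluster. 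Hence Condition~(ii) fails for every admissible $(K,L)$. The problem is structural: with an i.i.d.\ base on all edges of the complete graph, $\eta$-clusters may have arbitrary size.

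\textbf{What the paper actually does.} The key idea you are missing is that the base $\nu^{(\al)}$ must be supported on \emph{partial matchings}: $\nu^{(\al)}$-a.s.\ the active edges are pairwise disjoint and each enforces disagreement (Properties~(a) and~(b) of Corollary~\ref{cor-perm-bk}). With such a base, clusters have size at most $2$, so two vertices both carrying $+1$ can never share a cluster, and Condition~(ii) follows immediately for the selection rule $\Psi$ that picks witnesses on which $\om\equiv +1$. The substantial content of the proof is Lemma~\ref{lem-lin-sol} together with Lemma~\ref{lem-cw-sol}: one must show that the symmetric, permutation-invariant folding $\mu^{(\al)}$ is a nonnegative mixture of ``fair coins plus a matching of $1$-out-of-$2$ blocks'', which amounts to proving nonnegativity of the solution of the triangular linear system \eqref{px-lin-sys}. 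That positivity is not obvious and is established by an inductive differential argument in $x=e^{-2J}\ge 1$; your proposal contains no analogue of this step.
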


\noindent
In Section \ref{afcw-proof} we prove this theorem from the more general Theorem \ref{bk-gen}.

\smallskip\noindent
{\bf Remark:}
{\it 
If $n$ is even and all the $h_i's$ are $0$, letting $J \rightarrow  -\infty$ in \eqref{afcw-def} yields
the $n/2$-out-of-$n$ distribution (with $-1$ playing the role of $0$). More generally, taking all the $h_i$'s equal to
a common value $h$, and then letting $J \rightarrow  -\infty$ and simultaneously (in a suitable way, depending
on $k$) $h \rightarrow \infty$ (or $ - \infty$), yields the $k$-out-of-$n$ distribution. In this sense Theorem
\ref{bkj} can be seen as a special case of Theorem \ref{afcw-thm} above. In fact, the proof of
Theorem \ref{afcw-thm} is somewhat similar in spirit to that of Theorem \ref{bkj}.
Roughly speaking, it boils down to
showing that an antiferromagnetic Curie-Weiss measure without external fields on
$\{-1,+1\}^n$ can be written as a convex combination of products of `independent fair coin-flips'
and $1$-out-of-$2$ measures.
However, to show that such convex combination exists is much more involved than the
analogous work in \cite{BJ11} for the $k$-out-of-$n$ model (and is not intuitively obvious at all).
}

\begin{subsubsection}{Curie-Weiss model with three-body interactions} \label{cw-3b}
It is well-known and easy to see that the Ising Curie-Weiss measure \eqref{afcw-def} satisfies
the negative lattice condition
\begin{eqnarray} \label{nlc}
\mu(\omega \vee \omega') \mu(\omega \wedge \omega')
\leq \mu(\omega)\mu(\omega') \quad \quad \text{ for all } \omega, \omega' \in \Omega
\end{eqnarray}
if $J \leq 0$, and that it satisfies
the positive lattice condition (i.e. \eqref{nlc}
with reverse inequality) if $J \geq 0$.
Thus Theorem \eqref{afcw-thm} says that for the Ising Curie-Weiss model the
negative lattice condition implies the BK property, while
it was only known to imply negative association (see \cite{P00}), a property which is weaker than
BK (see \cite{M09}).

One could wonder if this is the case also for the Curie-Weiss model with
multibody interaction. 
This question is in some sense opposite to those in \cite{LST07} where they deal with 
infinite extendibility (IE) to an exchangeable distribution, a property which implies the positive 
lattice condition.


We investigate here only the first step in this direction, namely the
addition of a three-body interaction to \eqref{afcw-def}:
combining Lemmas \eqref{lem-lin-sol} and \eqref{lem-cw-sol} and the remark that the cubic part of the
interaction disappears in the foldings, we show that even for this model the negative
lattice condition implies BK. With $S= \{-1,1\}$ the Curie-Weiss model with
three-body interactions $\mu$ is the distribution on
$\Omega=S^n$ given by
\begin{equation}\label{afcw-cubic-def}
\mu(\om) = \frac{\exp\left(\,h \sum_i  \om_i + J_2\sum_{i, j}  \om_i \om_j \, + J_3
\sum_{i, j,k}  \om_i \om_j   \om_k\ \right)}{Z}, \,\,\, \om \in \{-1, 1\}^n,
\end{equation}
where in the last two  sums we take $i< j$ and $i<j<k$,
respectively.
\begin{thm} \label{afcw-cubic-thm}
If $\mu$ as in  \eqref{afcw-cubic-def} satisfies the negative lattice condition \eqref{nlc}, then
\begin{equation} \label{bk-afcw-cubic}
\mu(A \square B) \leq \mu(A) \, \mu(B),
\end{equation}
for all increasing $A, B \subset \{-1, +1\}^n$.
\end{thm}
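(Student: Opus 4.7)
I would derive \eqref{bk-afcw-cubic} from the general Theorem \ref{bk-gen} by closely following the template used for Theorem \ref{afcw-thm}. Three ingredients enter: a folding argument that kills the cubic interaction, the decomposition Lemma \ref{lem-cw-sol} for the residual antiferromagnetic two-body Curie-Weiss piece, and Lemma \ref{lem-lin-sol} to absorb the external field.

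First I would inspect how the Hamiltonian in \eqref{afcw-cubic-def} transforms under the foldings of $\mu$ to be defined in Section \ref{sect-foldings}. The central algebraic observation is that each cubic monomial $\om_i \om_j \om_k$ is odd in each of its three arguments, so as soon as the folding identifies a configuration with one obtained by flipping any of the coordinates $i$, $j$ or $k$, the monomial contributes with opposite signs in the paired configurations and cancels in the effective weight. Consequently the entire term $J_3 \sum_{i<j<k} \om_i \om_j \om_k$ drops out of every folding, leaving only an effective external field $h'$ and an effective pair coupling $J_2'$ on the unfolded coordinates.

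Second, I would exploit the negative lattice condition. Evaluating \eqref{nlc} on configurations differing at exactly two sites $i, j$ yields the pointwise requirement
\[ J_2 + J_3 \sum_{k \neq i,j} \om_k \;\leq\; 0 \]
for every choice of the remaining spins $\om_k \in \{-1,+1\}$. This forces $J_2' \leq 0$ for the effective pair coupling of each folding, placing us squarely in the antiferromagnetic two-body regime. Lemma \ref{lem-cw-sol} then writes the folded zero-field measure as a convex combination of products of independent fair coin flips and $1$-out-of-$2$ distributions, while Lemma \ref{lem-lin-sol} reinstates the effective field $h'$. Feeding this convex representation into Theorem \ref{bk-gen} produces \eqref{bk-afcw-cubic} exactly as in the proof of Theorem \ref{afcw-thm}.

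The main obstacle is the first step: making the slogan ``the cubic part disappears in the foldings'' rigorous requires careful tracking of all cross-terms (between folded and unfolded indices, and between monomials sharing only one or two of $\{i,j,k\}$) under the folding definition of Section \ref{sect-foldings}. Once this bookkeeping is in place, the remaining work is essentially a recycling of the argument for Theorem \ref{afcw-thm} with renamed effective coefficients.
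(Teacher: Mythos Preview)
Your overall plan is the paper's own: show that every folding of $\mu$ is a symmetric, permutation-invariant two-body Curie--Weiss measure with nonpositive coupling, then feed this into Lemma \ref{lem-lin-sol}, Lemma \ref{lem-cw-sol} and Corollary \ref{cor-perm-bk}. Two points in your write-up need correction, though neither breaks the strategy.

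First, the folding leaves \emph{no} effective external field. The same parity argument that kills the cubic term kills the linear term: $h\sum_{i\in M^c}\om_i + h\sum_{i\in M^c}\bar\om_i = 0$, and the cubic terms with exactly one index in $M^c$ likewise cancel. What survives is precisely
\[
\mu^{(\al)}(\om)\;\propto\;\exp\Bigl(2\bigl(J_2+J_3\textstyle\sum_{k\in M}\al_k\bigr)\sum_{i<j\in M^c}\om_i\om_j\Bigr),
\]
so the folding is already symmetric and there is nothing to ``reinstate''.

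Second, you have swapped the roles of Lemmas \ref{lem-lin-sol} and \ref{lem-cw-sol}. Lemma \ref{lem-lin-sol} applies only to \emph{symmetric} permutation-invariant measures and says that a nonnegative solution of the linear system \eqref{eq-lin-sol} yields the desired RCR; it does not absorb external fields. Lemma \ref{lem-cw-sol} then supplies that nonnegative solution once you know the effective $x\ge 1$ (equivalently $J_2'\le 0$). Your two-site evaluation of the NLC does give $J_2+J_3\sum_{k\ne i,j}\om_k\le 0$ for all choices of the remaining spins, and since $\sum_{k\in M}\al_k$ lies in $[-(n-2),n-2]$ this indeed forces $J_2'\le 0$. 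The paper arrives at the same conclusion by a slightly slicker route, rephrasing the NLC as $\mu^{(\al)}(\si)\ge\mu^{(\al)}(\hat\om)$ for the all-$+1$ configuration $\hat\om$ and reading off $x\ge 1$ directly from the form $x^{k'(n'-k')}$.
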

\noindent
This theorem will be proved in Section \ref{subs-cw3-proof}.
\end{subsubsection}
\subsubsection{A cluster-disjointness inequality for the ferromagnetic Ising model} \label{sect-fi}
In this section we state, for the ferromagnetic Ising model, a version of Theorem \ref{bkr} with a modified
form of the $\square$ operation.
First we recall that the ferromagnetic Ising measure for vertices $1, \cdots, n$, interaction
parameters $J_{i,j} \geq 0$, $1 \leq i, j \leq n$ and external fields $h_i$, $1 \leq i \leq n$,
is the distribution on $\{-1, +1\}^n$ given by

\begin{equation}  \label{fim-def}
\mu(\om) = \frac{\exp\left(\sum_{i, j} J_{i,j} \, \om_i \om_j \, +
\, \sum_i h_i \, \om_i\right)}{Z}, \,\,\, \om \in \{-1, +1\}^n.
\end{equation}
It is well-known that this measure satisfies the FKG inequality: for all increasing events $A$ and $B$,

\begin{equation}\label{fkg-ising}
\mu(A \cap B) \geq \mu(A) \mu(B).
\end{equation}
Note that (since the complement of an increasing event is decreasing) this is equivalent to saying that
for all increasing events $A$ and decreasing events $B$,

\begin{equation}\label{fkg-ising-equiv}
\mu(A \cap B) \leq \mu(A) \mu(B).
\end{equation}

To define the modified $\square$-operation we first consider the usual graph $G$ induced by the interaction
values. This is the graph with vertices $1, \cdots, n$ where two vertices $i$ and $j$ share an edge iff $J_{i,j} >0$. 
A {\em $+$ cluster} (with respect to a realization $\om$), is a connected component in the graph obtained from
$G$ by deleting all vertices $i$ with $\om_i = -1$. Similarly, $-$ clusters are defined. The term {\em spin
cluster} will be used for $+$ as well as for $-$ clusters.
If $K \subset [n]$, we use the notation $C(K)$ for the set of all vertices $i \in [n]$ for which there
is a $j \in K$ which is in the same spin cluster as $i$.
(Note that, in particular $C(K) \supset K$).
The modified $\square$
operation is defined as in \eqref{box-def}, but with the constraint $K \cap L = \emptyset$ replaced
by the stronger constraint that $C(K) \cap C(L) = \emptyset$. Note that this stronger constraint
is equivalent to saying that there is no `monochromatic' path from $K$ to $L$.

\begin{equation} \label{box-im-def}
A \boxminus B = \{\om \in \Om \, : \, \exists K, L \subset [n] \mbox{ s.t. } C(K) \cap C(L) = \emptyset, \,
[\om]_K \subset A \text{ and } [\om]_L \subset B\}.
\end{equation}

One of our main results is that, with the above modification of the $\square$-operation,
the ferromagnetic Ising model satisfies the analog of Theorem \ref{bkr}:

\begin{thm} \label{fim-thm}
The ferromagnetic Ising measure \eqref{fim-def} satisfies
\begin{equation} \label{fim-eq}
\mu(A \boxminus B) \leq \mu(A) \mu(B),
\end{equation}
for all $A, B \subset \{-1, +1\}^n$.
\end{thm}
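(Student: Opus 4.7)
The plan is to derive Theorem~\ref{fim-thm} from the general Theorem~\ref{bk-gen} by specializing the generalized random-cluster representation to the standard Edwards--Sokal coupling of the ferromagnetic Ising model. First I would introduce the joint measure $\nu$ on spin--edge pairs $(\sigma,\eta)\in\{-1,+1\}^n\times\{0,1\}^E$, where $E$ is the edge set of the graph $G$ of nonzero interactions. Under $\nu$, the spin marginal is $\mu$, and, conditional on $\eta$, the FK clusters $C_1,\dots,C_m$ partition $[n]$, spins are constant on each cluster, and the cluster-spin vector $\tau\in\{-1,+1\}^m$ is a product of independent Bernoullis with biases proportional to $\exp(\pm\sum_{i\in C_j}h_i)$.

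Next I would translate the event $A\boxminus B$ to cluster space. For any event $D\subset\{-1,+1\}^n$ let $\tilde D(\eta)=\{\tau:\sigma(\tau)\in D\}$, so that $P_\eta(\tilde D)=\mu(D\mid\eta)$. The crucial reduction is: if $\sigma(\tau)\in A\boxminus B$ with vertex witnesses $K,L$ satisfying $C(K)\cap C(L)=\emptyset$, then the cluster indices $\tilde K=\{c(k):k\in K\}$ and $\tilde L=\{c(l):l\in L\}$ are disjoint (because every FK cluster is contained in a single spin cluster), and they witness $\tilde A,\tilde B$ in the $\square$-sense. Hence $\{A\boxminus B\}\subseteq\{\tilde A\,\square\,\tilde B\}$ conditional on $\eta$, and Reimer's theorem (Theorem~\ref{bkr}) applied to the product measure $P_\eta$ yields
\[
P_\eta(A\boxminus B)\le P_\eta(\tilde A\,\square\,\tilde B)\le \mu(A\mid\eta)\,\mu(B\mid\eta).
\]

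The hard part will be closing the remaining gap between $E_\nu[\mu(A\mid\eta)\mu(B\mid\eta)]$ and $\mu(A)\mu(B)$, which in general has the wrong sign: taking $A=B$, Jensen's inequality gives $E_\nu[\mu(A\mid\eta)^2]\ge\mu(A)^2$, so a naive integration over $\eta$ after the conditional bound is insufficient. The fix must exploit the full strength of the no-monochromatic-path condition $C(K)\cap C(L)=\emptyset$, which is strictly stronger than mere FK-cluster-disjointness and is precisely the sort of robust separation the abstract Theorem~\ref{bk-gen} is designed to use via the foldings of the generalized random-cluster representation. Accordingly, the main obstacle is identifying the correct foldings of the Edwards--Sokal (or a closely related) random-cluster representation of $\mu$ and verifying that, for every spin configuration $\sigma\in A\boxminus B$, the sets $K$ and $L$ remain in distinct clusters in the random-cluster configurations of \emph{all} folded measures --- which is exactly the hypothesis of Theorem~\ref{bk-gen}. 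Once that verification is carried out, Theorem~\ref{bk-gen} delivers \eqref{fim-eq} directly, and the cluster-level Reimer step above becomes the "local" half of that argument.
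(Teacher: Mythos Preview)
Your first half --- conditioning on an Edwards--Sokal configuration, passing to cluster space, and applying Reimer --- is correctly set up, and you correctly diagnose that it cannot close: the leftover inequality $E_\nu[\mu(A\mid\eta)\mu(B\mid\eta)]\le\mu(A)\mu(B)$ is false in general. That whole paragraph is a detour the paper does not take.

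Your second half points in the right direction (invoke Theorem~\ref{bk-gen}) but has a genuine gap: you have not identified \emph{what} RCR to put on the foldings, and your phrase ``foldings of the Edwards--Sokal \ldots\ representation'' suggests you may be trying to fold the RCR rather than build an RCR for the folded measure. These are different operations, and the former does not obviously yield anything useful. The missing idea is this: for fixed $M,\al,\be,\ga$, compute $\mu^{(\al;\be,\ga)}$ directly from \eqref{fim-def}. All terms involving $h_i$ and all cross-terms with $\al$ cancel between $\om$ and $\bar\om$, and what remains is a Gibbs measure on $\prod_{i\in M^c}\{\be_i,\ga_i\}$ with a pair potential $\tilde\Phi_b$ satisfying the symmetry $\tilde\Phi_b(\om)=\tilde\Phi_b(\bar\om)$. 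The paper then equips this Gibbs measure with the \emph{monotone} RCR of Lemma~\ref{gibbs-rcr-prop} (not the FK representation of the original $\mu$). Symmetry of $\tilde\Phi$ immediately gives Condition~(i). For Condition~(ii), one checks that if $\{i,j\}$ straddles the boundary of a spin cluster $C(K)$ --- i.e.\ $\om_i\neq\om_j$ on the original scale, which for the ferromagnetic pair interaction means $\Phi_{\{i,j\}}(\om)$ is minimal --- then $\tilde\Phi_{\{i,j\}}$ attains its minimum at $\om_{\{i,j\}}$, so by monotonicity $\eta_{\{i,j\}}$ must be the full set and the edge is inactive. Hence $K\cap M^c$ and $L\cap M^c$ are $\eta$-separated whenever $C(K)\cap C(L)=\emptyset$.

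In the paper this is packaged as Theorem~\ref{gibbs-thm}: one shows once and for all that any Gibbs measure satisfies the hypotheses of Theorem~\ref{bk-gen} with $C(\cdot)$ defined via ``efficient'' hyperedges, and then checks (two lines) that for the ferromagnetic Ising potential the efficient pairs are exactly the monochromatic ones. Your plan would be complete if you replaced the vague ``identify the correct foldings of the Edwards--Sokal representation'' with the concrete step ``compute $\mu^{(\al;\be,\ga)}$, observe it is Gibbs with symmetric potential, and use its monotone RCR.''
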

\noindent
In Section \ref{sect-gibbs} we present a more general result of this flavour,
Theorem \ref{gibbs-thm}, for Gibbs
measures. In Section \ref{sect-fim-proof} we show that Theorem \ref{fim-thm} can be obtained easily
from Theorem \ref{gibbs-thm} (which in turn follows from our most general result, Theorem \ref{bk-gen}).

Note that if $A$ is increasing and $B$ decreasing, then $A \boxminus B = A \cap B$, so that the FKG inequality
\eqref{fkg-ising} can be considered as a special case of \eqref{fim-eq}.
Another special case is given by the following corollary. 
For $W, W' \subset [n]$ we write 
$W \stackrel{+}{\ra} W'$ for the event that there is a $+$ path (i.e. a path, with respect to the graph structure
mentioned above, of which every vertex
has value $+1$) from some vertex in $W$ to some vertex in $W'$.
We denote the complement of this event simply by $(W \stackrel{+}{\ra} W')^c$. 

\begin{cor} \label{ising-1-cor}
Let $\mu$ be the Ising distribution defined above, and let $X, Y, U, W \subset [n]$.
Then
\begin{equation} \label{is-cor-eq-1}
\mu( \exists x \in X, u\in U \mbox{ s.t. } x \stackrel{+}{\ra} Y, \, u \stackrel{+}{\ra} W, \,
(x \stackrel{+}{\ra} u)^c) \leq
\mu(X \stackrel{+}{\ra} Y) \, \mu(U \stackrel{+}{\ra} W).
\end{equation}
\end{cor}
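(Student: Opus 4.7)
The plan is to deduce the corollary as an essentially immediate consequence of Theorem \ref{fim-thm}, by choosing the events $A$ and $B$ in that theorem to be $A := \{X \stackrel{+}{\ra} Y\}$ and $B := \{U \stackrel{+}{\ra} W\}$, so that $\mu(A)\mu(B)$ is exactly the right-hand side of \eqref{is-cor-eq-1}. Since Theorem \ref{fim-thm} yields $\mu(A \boxminus B) \leq \mu(A)\mu(B)$ with no monotonicity assumption, it suffices to prove the set-theoretic containment
\[
\{\exists x\in X,\, u\in U:\, x \stackrel{+}{\ra} Y,\, u \stackrel{+}{\ra} W,\, (x \stackrel{+}{\ra} u)^c\} \;\subset\; A \boxminus B.
\]

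To verify this containment, I would fix a configuration $\om$ in the left-hand event and witnesses $x \in X$, $u \in U$ as above; then choose a specific $+$ path $\pi_1$ in $\om$ from $x$ to some vertex of $Y$ and a specific $+$ path $\pi_2$ in $\om$ from $u$ to some vertex of $W$. Setting $K$ and $L$ to be the vertex sets of $\pi_1$ and $\pi_2$ respectively, I would then check the two requirements in the definition \eqref{box-im-def} of $\boxminus$.

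Both checks are routine. All vertices of $K$ (resp.\ $L$) have value $+1$ in $\om$ and form a connected $+$ subgraph containing $x$ (resp.\ $u$), so $C(K)$ coincides with the entire $+$ cluster of $x$ and $C(L)$ with that of $u$; the hypothesis $(x \stackrel{+}{\ra} u)^c$ then translates exactly into $C(K) \cap C(L) = \emptyset$. For the second requirement, any $\alpha \in [\om]_K$ agrees with $\om$ on $K$, so the path $\pi_1$ remains a $+$ path in $\alpha$ from $x \in X$ to $Y$, giving $[\om]_K \subset A$; likewise $[\om]_L \subset B$. The only mild point of care is precisely this translation from the event-theoretic statement of the corollary into the combinatorial witness structure demanded by $\boxminus$; once that is in place the conclusion is immediate from Theorem \ref{fim-thm}.
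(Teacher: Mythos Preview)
Your proposal is correct and follows essentially the same approach as the paper: choose $A = \{X \stackrel{+}{\ra} Y\}$, $B = \{U \stackrel{+}{\ra} W\}$, verify that the event on the left-hand side of \eqref{is-cor-eq-1} is contained in $A \boxminus B$, and apply Theorem~\ref{fim-thm}. The paper in fact asserts equality of the two events (the reverse containment also holds, since any $K$ witnessing $[\om]_K \subset A$ must contain a $+$ path from some $x\in X$ to $Y$, and similarly for $L$), but your one-sided containment is all that is needed, and you spell out the verification more carefully than the paper does.
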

\begin{proof} (of Corollary \ref{ising-1-cor}).
Take for $A$ the event $\{X \stackrel{+}{\ra} Y\}$ and for $B$ the event $\{U \stackrel{+}{\ra} W\}$.
Then $A \boxminus B$ is the event in the l.h.s. of \eqref{is-cor-eq-1}. Now apply Theorem \ref{fim-thm}.
\end{proof} 

\noindent
{\bf Remark:} {\em If $X$, $Y$ $U$ and $W$ have only one element, say $x$, $y$, $u$ and $w$ respectively,
the event in the l.h.s. of \eqref{is-cor-eq-1} can be written as
$$\{x \stackrel{+}{\ra} y\} \, \cap \, \cup_{K}^{(*)} ([{\bf 1}]_K \cap [{\bf -1}]_{\partial K}),$$
where the union marked by $*$ is over all connected components $K$ with $\{u, w\} \subset  K$ and
$\{x, y\} \cap K = \emptyset$, and where $[{\bf 1}]_K$  is
the event that all vertices in $K$ have value $1$, and $[{\bf -1}]_{\partial K}$ is the event that
all vertices that are not in $K$ but have a neighbour in $K$ have value $-1$.
Since this is a union of disjoint events, a more direct decoupling inequality of Borgs and Chayes \cite{BC96} can
be applied in this case to obtain \eqref{is-cor-eq-1}. However, if the sets $X$, $Y$, $U$ and $W$ have more elements,
the event in the l.h.s. of \eqref{is-cor-eq-1} can, in general, not be written as a suitable disjoint union, and
the Borgs-Chayes decoupling inequality is not applicable.
}

\smallskip
The following `four-arm event' is another example which illustrates how Theorem \ref{fim-thm} can be used.
Consider the graph with vertices $V = \{-k, \cdots,k\}^2 \setminus \{(0,0)\}$, and where
two vertices $v = (v_1, w_1)$ and $w = (w_1,w_2)$ share an edge iff 
$|v_1 - w_1| + |v_2 - w_2| = 1$. In other words, this graph is the $2 k \times 2k$ box on the
square lattice, centered at $O$, but with $O$ `cut out'. Consider the Ising distribution
on $\{-1, +1\}^V$ with external fields $h_v, v \in V$ and interaction parameters $J_{v,w} > 0$
if $v$ and $w$ share an edge and $0$ otherwise.
We use the notation $W \stackrel{+}{\ra} W'$ as in Corollary \eqref{is-cor-eq-1}, and the notation
$W \stackrel{-}{\ra} W'$ for its analog, with $+$ replaced by $-$.
Further, we will use here the notation $\partial V$ in a slightly different way as above, namely for the set of
those vertices $(v_1, v_2) \in V$ for which $|v_1| + |v_2| = k$.

\begin{cor} \label{ising-cor}
Let $\mu$ be the Ising distribution on $V$ described above. We have
\begin{eqnarray}\label{is-cor-eq}
\, &\,& \mu\big((1,0) \overset{+}{\ra} \partial V, \, (-1,0) \overset{+}{\ra} \partial V, \,
(0,1) \overset{-}{\ra} \partial V, \, (0,-1) \overset{-}{\ra} \partial V\big) \leq \\ \nonumber
\, &\,& 
\mu\big((1,0) \overset{+}{\ra} \partial V\big) \, \mu\big((-1,0) \overset{+}{\ra} \partial V\big) \,
\mu\big((0,1) \overset{-}{\ra} \partial V\big) \, \mu\big((0,-1) \overset{-}{\ra} \partial V\big).
\end{eqnarray}
\end{cor}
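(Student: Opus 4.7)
The plan is to apply Theorem \ref{fim-thm} after a suitable decomposition. Set
\begin{align*}
A &= \{(1,0)\overset{+}{\ra}\partial V\}\cap\{(0,1)\overset{-}{\ra}\partial V\},\\
B &= \{(-1,0)\overset{+}{\ra}\partial V\}\cap\{(0,-1)\overset{-}{\ra}\partial V\},
\end{align*}
so that the event on the l.h.s.\ of \eqref{is-cor-eq} equals $A\cap B$. First I would show $A\cap B\subset A\boxminus B$. Given $\omega\in A\cap B$, take simple witnessing paths $\pi_{\pm 1}$ and $\sigma_{\pm 1}$ for the four arms, and let $K$ (resp.\ $L$) be the vertex set of $\pi_1\cup\sigma_1$ (resp.\ $\pi_{-1}\cup\sigma_{-1}$). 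The inclusions $[\omega]_K\subset A$ and $[\omega]_L\subset B$ are immediate; because $+$ and $-$ spin clusters are always vertex-disjoint, the remaining condition $C(K)\cap C(L)=\emptyset$ reduces to showing that on the four-arm event $(1,0)$ and $(-1,0)$ belong to distinct $+$ clusters and $(0,1),(0,-1)$ belong to distinct $-$ clusters.

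These two statements are the main step and come from 2D planarity. Suppose, toward a contradiction, that a simple $+$ path $\tau$ joins $(1,0)$ and $(-1,0)$ in $V$. Viewed in $\R^2$, $\tau$ is a simple arc avoiding the removed origin; close it by a small arc between $(-1,0)$ and $(1,0)$ that also avoids the origin, producing a Jordan curve $\widetilde\tau$ whose bounded component $R$ can be arranged to contain the origin. Since $\tau$ is $+$ while $(0,\pm 1)$ are $-$, the only lattice points at which $\tau$ crosses the line $\{x=0\}$ are $(0,y)$ with $|y|\ge 2$; let $p,q$ be the numbers of such crossings with $y\ge 2$ and $y\le -2$ respectively. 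The endpoints of $\tau$ have $x$-coordinates of opposite sign, so $p+q$ is odd, and a standard ray-crossing parity count forces exactly one of $(0,1)$, $(0,-1)$ to lie in $R$. Calling that vertex $v$, any $-$ path from $v$ to $\partial V$ would have to leave $R$; but this requires either crossing $\tau$ (impossible, $\tau$ being $+$) or passing through the origin (impossible, $(0,0)\notin V$), contradicting the $-$ arm from $v$. The color-reversed argument rules out $(0,1)$ and $(0,-1)$ being in the same $-$ cluster.

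With the containment established, Theorem \ref{fim-thm} gives $\mu(A\cap B)\le \mu(A\boxminus B)\le \mu(A)\mu(B)$. To factor $\mu(A)$, note that any witness for $\{(1,0)\overset{+}{\ra}\partial V\}$ is a $+$ path and any witness for $\{(0,1)\overset{-}{\ra}\partial V\}$ is a $-$ path, so their spin clusters have opposite sign and are vertex-disjoint. Thus $A$ equals the $\boxminus$-combination of its two constituent events, and Theorem \ref{fim-thm} (equivalently, FKG in the form \eqref{fkg-ising-equiv}) gives
\[
\mu(A)\le \mu\bigl((1,0)\overset{+}{\ra}\partial V\bigr)\,\mu\bigl((0,1)\overset{-}{\ra}\partial V\bigr),
\]
and likewise for $\mu(B)$. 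Multiplying produces \eqref{is-cor-eq}. The main obstacle is the planarity step; its clean parity count relies crucially on the exclusion of $(0,\pm 1)$ from any $+$ path, which pushes all crossings of the $y$-axis to distance at least $2$ from the origin.
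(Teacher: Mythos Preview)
Your proof follows exactly the paper's route: the same choice of $A$ and $B$, the containment of the four-arm event in $A\boxminus B$, an application of Theorem~\ref{fim-thm}, and then FKG (equivalently, a second use of Theorem~\ref{fim-thm}) to factor $\mu(A)$ and $\mu(B)$. The only difference is that the paper dismisses the containment step as ``easy to see'' while you supply the underlying planarity argument; your closing arc would be cleaner if taken as the straight segment through the origin, which makes the separation of $(0,1)$ and $(0,-1)$ and the impossibility of a $-$ path crossing it immediate.
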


\noindent
\begin{proof} (of Corollary \ref{ising-cor} from Theorem \ref{fim-thm}). 
Let $A$ be the event $\{(1,0) \overset{+}{\ra} \partial V, (0,1) \overset{-}{\ra} \partial V \}$,
and $B$ the event $\{(-1,0) \overset{+}{\ra} \partial V, (0,-1) \overset{-}{\ra} \partial V \}$.
It is easy to see that the event in the l.h.s. of \eqref{is-cor-eq} is contained in $A \boxminus B$,
(with $\boxminus$ as defined in \eqref{box-im-def}).
Hence, by Theorem \ref{fim-thm},
the l.h.s. of \eqref{is-cor-eq} is at most $\mu(A) \mu(B)$, which, by applying the FKG inequality 
\eqref{fkg-ising-equiv} to $\mu(A)$
and $\mu(B)$ separately, is at most the r.h.s. of \eqref{is-cor-eq}. \end{proof}

\smallskip\noindent
{\bf Remarks:} \\
{\em
(i) At first sight one might have the impression that Corollary \ref{ising-cor} can be proved
more directly, by the earlier mentioned decoupling inequalities in \cite{BC96} or a straightforward
combination of FKG and elementary manipulations. However, we
do not see how to do that. \\
(ii) Various versions of Corollary \ref{ising-cor} can be proved in exactly the same way.
One example is the analog of this Corollary, where the `hole' in the box is bigger than just
one point, and where the event in the l.h.s. of \eqref{is-cor-eq} is replaced by
the event that there exist four points $u$, $v$, $w$ and $x$ on the boundary of the
hole (denoted by $\partial H$) with the property that travelling along
this boundary clockwise, starting in $u$, we first
encounter $v$, then $w$ and then $x$, and such that $u \overset{+}{\ra} \partial V$,
$v \overset{-}{\ra} \partial V$, $w \overset{+}{\ra} \partial V$ and $x \overset{-}{\ra} \partial V$.
(In this case the r.h.s. of \eqref{is-cor-eq} is replaced by the product of
$(\mu(\partial H \overset{+}{\ra} \partial V))^2$
and $(\mu(\partial H \overset{-}{\ra} \partial V))^2$.).
A different kind of version, which can also be proved in the same way is that where
the $-$path events are replaced by their analogs for so-called $*$ paths (i.e. where besides horizontal
and vertical steps, also diagonal steps are allowed in the path).
}

\end{subsubsection}

\begin{subsubsection}{Potts models} \label{sect-potts}
The Potts measure for vertices $1, \cdots, n$, set of `spin' values $S$, and interaction
parameters $J_{i,j}$, $1 \leq i, j \leq n$
is the distribution on $S^n$ given by

\begin{equation}  \label{afp-def}
\mu(\om) = \frac{\exp\left(\sum_{i, j} J_{i,j} \, I_{\{\om_i = \om_j\}} \right)}{Z}.
\end{equation}

If all the $J_{i,j}$'s are larger (smaller) than or equal to $0$ we say that the measure
is ferromagnetic (antiferromagnetic). Note that if $|S| = 2$, the ferromagnetic Potts measure is, in fact,
the ferromagnetic Ising model.
Remarkably, if $|S| \geq 3$ we do not have non-trivial results for the ferromagnetic Potss model, but we
do have one for the antiferromagnetic case.
First, as we did for the Ising model,
we consider the graph
with vertices $1, \cdots, n$ where two vertices $i$ and $j$ share an edge iff $J_{i,j}  \neq 0$.
A path $\pi$ in this graph is called {\em changing} (w.r.t. a configuration $\om$) if,
for each two consecutive vertices $v$ and $w$ on
$\pi$, $\om_v \neq \om_w$. (In particular, the path consiting of the vertex $v$ only, is considered as
a changing path).
The {\em cluster} of a set $K \subset [n]$, again (as in Section \ref{sect-fi}) denoted by $C(K)$, is now defined as the set of all
vertices $v$ for which there is a changing path with starting point in $K$ and endpoint $v$.

The modified operation $\boxminus$ we now use has exactly the same form as 
\eqref{box-im-def} (but now with the new meaning of $C(K)$ and $C(L)$). We get the following analog of
Theorem \ref{fim-thm}.

\begin{thm} \label{afp-thm}
Consider the Potts  measure $\mu$ (see \eqref{afp-def}) with all $J_{i,j}$'s non-positive.
With the above defined notion of clusters and $\boxminus$ operation, this measure satisfies

\begin{equation} \label{afp-eq}
\mu(A \boxminus B) \leq \mu(A) \mu(B),
\end{equation}
for all $A, B \subset S^n$.
\end{thm}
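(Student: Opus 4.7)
The plan is to derive Theorem \ref{afp-thm} from the Gibbs specialization Theorem \ref{gibbs-thm} in complete analogy with the derivation of Theorem \ref{fim-thm}, using the \emph{antiferromagnetic} analogue of the Edwards--Sokal random-cluster representation. Specifically, since each $J_{i,j}\leq 0$, every interaction factor in \eqref{afp-def} admits the nonnegative decomposition
\[
\exp\bigl(J_{i,j}\,I_{\{\om_i=\om_j\}}\bigr) \;=\; e^{J_{i,j}} + (1-e^{J_{i,j}})\, I_{\{\om_i\neq\om_j\}}.
\]
Introducing an auxiliary Bernoulli variable $\eta_{i,j}\in\{0,1\}$ for each edge of the interaction graph, this yields a coupling measure $\tilde\mu$ on pairs $(\om,\eta)$ whose $\om$-marginal is $\mu$ and under which every ``open'' edge (one with $\eta_{i,j}=1$) forces $\om_i\neq\om_j$.

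The next observation is that, in this representation, two vertices lie in the same connected component of the open-edge graph if and only if they are joined by a changing path in the sense introduced just before the statement of Theorem \ref{afp-thm}. Hence the open-edge clusters of the random-cluster configuration coincide exactly with the clusters $C(\cdot)$ used in the definition of $\boxminus$ in this section. Consequently, $A\boxminus B$ is precisely the event that $A$ and $B$ can be witnessed on index sets lying in disjoint families of open-edge clusters, which is the situation to which the cluster-disjoint-witnessing machinery of Theorem \ref{bk-gen}, specialized via Theorem \ref{gibbs-thm}, applies. The remainder of the derivation then mirrors that of Theorem \ref{fim-thm}, the only substantive changes being the reinterpretation of ``open'' (disagreement instead of agreement) and the replacement of the weights $(1-e^{-J_{i,j}})$ by $(1-e^{J_{i,j}})$.

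The step I expect to be the most delicate is the verification of the folding hypothesis of Theorem \ref{gibbs-thm}. In the ferromagnetic Ising setting (Theorem \ref{fim-thm}) the spin space $\{-1,+1\}$ admits only one nontrivial folding, and its effect on the random-cluster weights is transparent. For a general Potts spin space $S$ with $|S|\geq 3$, foldings permit richer identifications among spin values, and one has to check that after any folding the resulting measure still has a product-over-edges expression with nonnegative edge weights, and that the induced open-edge connectivity on the folded state space still agrees with the changing-cluster notion that defines $\boxminus$. Fortunately the pair indicator $I_{\{\om_i=\om_j\}}$ is preserved by any identification of spin values, so this should go through; nevertheless, the combinatorial bookkeeping of how multiple Potts spin values merge under a folding is the genuinely new ingredient beyond the Ising case, and is where the bulk of the work will lie.
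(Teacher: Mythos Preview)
Your plan conflates two different routes and, as a result, mislocates the work. Theorem \ref{gibbs-thm} has no ``folding hypothesis'' for you to verify: the foldings and their RCRs are handled entirely inside its proof (via Lemma \ref{gibbs-rcr-prop} applied to the folded potential \eqref{gibb-fi-til}). To invoke Theorem \ref{gibbs-thm} you only need to check that the $\boxminus$ of Section \ref{sect-potts} (changing-path clusters) is contained in the $\boxminus$ of Section \ref{sect-gibbs} (efficient-path clusters). That reduces to a single local verification: with $\Phi_{\{i,j\}}(\om) = J_{ij}\,I_{\{\om_i=\om_j\}}$ and $J_{ij}<0$, show that $\om_i=\om_j$ forces $\{i,j\}$ to be \emph{inefficient} in the sense of \eqref{bad-def}. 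Unwinding that definition gives the indicator inequality
\[
I_{\{\om_i=\om_j\}} + I_{\{x=y\}} \;\ge\; I_{\{\om_i=y\}} + I_{\{\om_j=x\}}\qquad\text{for all }x,y\in S,
\]
which is immediate (the left side is at least $1$, and the right side equals $2$ only when all four values coincide). That is the whole argument; no Edwards--Sokal coupling enters. The derivation of Theorem \ref{fim-thm} that you cite as a template is likewise a pure inefficiency check and does not use the Fortuin--Kasteleyn representation.

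Two further points. First, your assertion that the open-edge clusters of the antiferromagnetic Edwards--Sokal coupling coincide \emph{exactly} with the changing-path clusters is false: conditionally on $\om$, an edge with $\om_i\neq\om_j$ is open only with probability $1-e^{J_{ij}}<1$, so the open-edge clusters are random \emph{refinements} of the changing-path clusters. Only that one-sided inclusion is needed, but the ``if and only if'' is wrong. Second, if you truly wanted to bypass Theorem \ref{gibbs-thm} and apply Theorem \ref{bk-gen} directly with an explicit RCR, you would have to build bases $\nu^{(\al;\be,\ga)}$ satisfying Conditions (i) and (ii) for \emph{every} folding. For $|S|\ge 3$ the folded pair potential on an edge $\{i,j\}\subset M^c$ is $J_{ij}\bigl(I_{\{\om_i=\om_j\}}+I_{\{\bar\om_i=\bar\om_j\}}\bigr)$, whose dependence on $\om$ varies with the particular choice of $\be_i,\ga_i,\be_j,\ga_j$; your remark that ``the pair indicator is preserved by any identification of spin values'' does not address this. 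The route through Theorem \ref{gibbs-thm} sidesteps the issue entirely.
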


\noindent
In Section \ref{afp-proof} we will show that this theorem follows from Theorem \ref{gibbs-thm} below.

\end{subsubsection}

\begin{subsubsection}{Gibbs measures} \label{sect-gibbs}
This section sets the two previous ones in a more general context. (Since the Ising model and Potts
model are such widely used models, and the correspondence of the $\boxminus$ operation in Theorem \ref{fim-thm} and
\ref{afp-thm} to that in Theorem \ref{gibbs-thm} below is not immediate, we devoted a separate section to them).
First we give several definitions and introduce the needed notation.

If $K$ and $L$ are disjoint subsets of $[n]$, and $\al \in S^K$ and $\gamma \in S^L$,
we denote by $\al \circ \om$ the configuration on $K \cup L$ that agrees with $\al$ on $K$ and with
$\gamma$ on $L$. Formally,

$$\al \circ \gamma := \{\om \in S^{K \cup L} \, : \, \om_K = \al \text{ and } \om_L = \gamma \}.$$
A potential $\Phi$ (for the configuration space $S^{[n]}$) is a collection of functions 

$$\Phi_b \, : \, S^b \rightarrow \R, \,\,\, b \subset [n].$$
Below we will often use the term `hyperedges' for subsets of $[n]$, but sometimes we will simply
call them edges.
In most examples $\Phi_b \equiv$ some constant for most $b$'s.
For the ease of notation we will often (when there is no risk of confusion) write $\Phi(\om_b)$ instead
of $\Phi_b(\om_b)$.

A hyperedge $b$ is called {\em inefficient} (with respect to a configuration $\om \in S^b$) if $|b| \geq 2$ and,
for every $N \subset b$ and every $\si \in S^b$,

\begin{equation} \label{bad-def}
\Phi_b(\om_b) + \Phi_b(\si) \leq \Phi_b(\om_N \circ \si_{b \setminus N}) + \Phi_b(\si_N \circ \om_{b \setminus N}).
\end{equation}

\noindent
Note that if $\Phi_b$ is constant on $S^b$, then $b$ is is inefficient with respect to every $\om \in S^b$.

Let $K \subset [n]$ and $v \in [n]$. 
A {\em hyperpath} from $K$ to $v$ is a sequence
$\pi = (b_1, \cdots, b_m)$, such that $K \cap b_1 \neq \emptyset$, $b_i \cap b_{i+1} \neq \emptyset, \,\,
1 \leq i \leq m-1$ and $v \in b_m$. If (w.r.t. a certain configuration)
none of these edges $b_i, 1 \leq i\leq m$, is inefficient, we
say that $\pi$ is an efficient path (w.r.t. that configuration).
We define the cluster of $K$, denoted by $C(K)$, as the set of all vertices $v$ for which there is an efficient path
from $K$ to $v$.

\smallskip\noindent 
{\bf Remark:}
{\it
Note that, by the definition of an inefficient edge, there is always an efficient path from a vertex $v$ to
itself (namely the path which consists only of the edge $\{v\}$). Hence $C(K) \supset K$.
}

\smallskip\noindent
We define the following modification of the box-operation:

\begin{equation} \label{box-gibbs-def}
A \boxminus B = \{\om \in \Om \, : \, \exists K, L \subset [n] \mbox{ s.t. } C(K) \cap C(L) = \emptyset, \,
[\om]_K \subset A \text{ and } [\om]_L \subset B\}.
\end{equation}

\smallskip\noindent
{\bf Remark:}
{\it
We have used here the same notation as for the ferromagnetic Ising model in the previous section.
Note that, by taking for $\Phi$ the `usual potential function for the Ising model', definition 
\eqref{box-gibbs-def} becomes exactly definition \eqref{box-im-def}.
}

\medskip
The Gibbs measure for the potential $\Phi$ is the measure $\mu$ on $S^{[n]}$ given by

\begin{equation} \label{gibbs-def}
\mu(\om) = \frac{\exp\left(\sum_{b} \Phi_b\left(\om_b\right)\right)}{Z},
\end{equation}
where the sum is over all $b \subset [n]$ (or, equivalently, by adjusting $Z$, over all $b \subset [n]$ with the
property that $\Phi_b$ is non-constant on $S^b$).

The main result of this section is:

\begin{thm} \label{gibbs-thm}
The Gibbs measure \eqref{gibbs-def} satisfies
\begin{equation} \label{gibbs-eq}
\mu(A \boxminus B) \leq \mu(A) \mu(B),
\end{equation}
for all $A, B \subset S^n$.
\end{thm}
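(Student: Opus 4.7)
The plan is to derive Theorem \ref{gibbs-thm} as a consequence of the abstract Theorem \ref{bk-gen} stated in Section \ref{sect-general}, by supplying three ingredients: (i) a generalized Fortuin--Kasteleyn random-cluster representation of the Gibbs measure $\mu$ in \eqref{gibbs-def}, (ii) a folding structure on $S^{[n]}$ compatible with that representation, and (iii) an identification of the event $A \boxminus B$ defined in \eqref{box-gibbs-def} with the ``disconnected witnesses'' hypothesis required by Theorem \ref{bk-gen}.

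For the random-cluster representation, I would introduce for each hyperedge $b$ an auxiliary state variable $\eta_b$ that records whether $b$ is \emph{open} (forcing some constraint on $\om_b$) or \emph{closed} (imposing no constraint beyond a marginal weight). One then seeks non-negative weights $w_b(\om_b, \eta_b)$ such that $\sum_{\eta_b} w_b(\om_b, \eta_b) = \exp(\Phi_b(\om_b))$ for every $\om_b \in S^b$, which guarantees that the Gibbs measure $\mu$ is the $\om$-marginal of $\prod_b w_b(\om_b, \eta_b)$. The closed-state weight is chosen so that the conditional distribution of $\eta_b$ given $\om_b$ assigns positive probability to ``closed'' precisely when $b$ is inefficient with respect to $\om_b$ in the sense of \eqref{bad-def}. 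With this choice, the connectivity relation on $[n]$ induced by the open hyperedges matches the efficient-hyperpath structure defining $C(K)$.

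The folding procedure of Theorem \ref{bk-gen} applied to $\mu$ uses a reference configuration and the swap operation $(\om, \si) \mapsto (\om_N \circ \si_{b \setminus N},\, \si_N \circ \om_{b \setminus N})$ for $N \subset b$. The inequality \eqref{bad-def} says exactly that on an inefficient hyperedge $b$ the sum of the potentials does not decrease under such a swap, so foldings preserve the correct weight balance on the ``closed'' hyperedges and leave the random-cluster representation in an admissible form. Granted these ingredients, the final implication is straightforward: if $\om \in A \boxminus B$ with disjoint witnesses $K, L$ such that $C(K) \cap C(L) = \emptyset$, then no efficient hyperpath joins $K$ to $L$, so every hyperpath from $K$ to $L$ in the coupled random-cluster realization must traverse a hyperedge that is closed. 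Hence $K$ and $L$ lie in distinct random-cluster components, which is precisely the input that Theorem \ref{bk-gen} requires to conclude $\mu(A \boxminus B) \leq \mu(A)\mu(B)$.

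The main obstacle I expect is the simultaneous design of the weights $w_b$ so that (a) the $\om$-marginal is the Gibbs measure, (b) the open hyperedges reproduce the efficient-path connectivity entering the definition of $\boxminus$, and (c) the representation remains compatible with foldings. The rather specific swap-type form of the inefficiency condition \eqref{bad-def}, involving arbitrary subsets $N \subset b$ and arbitrary $\si \in S^b$, strongly suggests that it is tailored to make exactly this three-way compatibility work; verifying this, and checking that the weights may be taken non-negative in full generality, is the technical heart of the argument.
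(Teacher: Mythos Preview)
Your overall strategy---reduce to Theorem \ref{bk-gen} via a random-cluster representation whose connectivity matches the efficient-hyperpath clusters $C(K)$---is the right one, but the order of operations in your plan is inverted relative to what actually makes the argument go through, and this inversion creates a real gap.

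Theorem \ref{bk-gen} does not ask for an RCR of $\mu$; it asks for an RCR of each \emph{folding} $\mu^{(\al;\be,\ga)}$, satisfying the Symmetry Condition (i) ($\eta_b=\bar\eta_b$) and the Separation Condition (ii). You propose to build an RCR of $\mu$ itself and then argue that it ``remains compatible with foldings''. But $\mu^{(\al;\be,\ga)}(\om)\propto \mu(\al\circ\om)\,\mu(\al\circ\bar\om)$ is a \emph{product} of two $\mu$-values, so an RCR of $\mu$ yields only a representation indexed by pairs $(\eta,\eta')$, not a single $\eta$, and there is no evident mechanism by which a binary open/closed RCR of $\mu$ would descend to one for the folding---let alone one satisfying the symmetry $\eta_b=\bar\eta_b$, which you never address.

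The paper's key observation, which your plan misses, is to fold \emph{first}: one checks directly from \eqref{gibbs-def} that $\mu^{(\al;\be,\ga)}$ is itself a Gibbs measure on $\prod_{i\in M^c}\{\be_i,\ga_i\}$, for a new potential $\tilde\Phi_b(\om)=\sum_{b\subset b'\subset b\cup M}\bigl(\Phi((\al\circ\om)_{b'})+\Phi((\al\circ\bar\om)_{b'})\bigr)$. This potential is automatically symmetric, $\tilde\Phi_b(\om)=\tilde\Phi_b(\bar\om)$, and that symmetry is what delivers Condition (i). One then applies a general RCR for Gibbs measures (Lemma \ref{gibbs-rcr-prop}) to $\mu^{(\al;\be,\ga)}$; here $\eta_b$ is not a binary open/closed flag but an arbitrary \emph{monotone} subset of $S^b$ (upward-closed with respect to $\tilde\Phi_b$), and the explicit base is a product over $b$ of telescoping differences of $\exp(\tilde\Phi_b)$. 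With this structure, Condition (ii) falls out cleanly: if $b\subset M^c$ crosses the boundary of $C(K)$, every $b'\supset b$ is inefficient with respect to $\al\circ\om$, and plugging the swap inequality \eqref{bad-def} into each summand of $\tilde\Phi_b$ shows $\tilde\Phi_b(\om_b)$ is the minimum of $\tilde\Phi_b$; monotonicity of $\eta_b$ then forces $\eta_b$ to be the full set, i.e.\ $b$ is inactive.

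So the specific swap form of \eqref{bad-def} is indeed tailored to the argument, as you suspected---but it feeds into the \emph{folded} potential $\tilde\Phi$, not into an RCR of $\mu$. Reordering your plan to ``fold, then represent'' and replacing the binary $\eta_b$ by monotone subsets should let you complete the proof.
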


\noindent
We will prove in Section \ref{gibbs-proofs} that the above theorem follows from Theorem \ref{bk-gen}.

\smallskip\noindent

\end{subsubsection}
\end{subsection}
\end{section}

\begin{section}{General framework} \label{sect-general}

\begin{subsection}{Generalized random-cluster representations} \label{sect-gen-rcr}

As before, $S$ is a finite set (and will play the role of `single-site state space'), and
as set of `indices' (also called `vertices') we take $[n] := \{1, \cdots, n\}$. (So the state space is 
$S^n$).
The set of all subsets of a set $V$ will be denoted by $\calP(V)$. In the case $V = [n]$ we
simply write $\calP(n)$ for $\calP([n])$. Elements of $\calP(n)$ will often be called hyperedges.


We assign, to each hyperedge $b$, a random subset of $S^b$. Let $\nu$ denote the joint distribution
of this collection of subsets. So $\nu$ is a probability measure on $\prod_{b \subset [n]} \calP(S^b)$.
In the following, we will typically use the notation $\eta_b$ for a subset of $S^b$, and the notation
$\eta$ for the collection $(\eta_b, b \subset [n])$.
Further (as before) we typically denote an element of $S^n$ by $\om$. 
For $\om$ and $\eta$ as above, we say that $\om$ is compatible with $\eta$ (notation: 
$\om \sim \eta$) if $\om_b \in \eta_b$ for all $b \subset [n]$.

\begin{defn} \label{def-RCR}
Let $\mu$ be a probability distribution on $S^n$.
We say that $\mu$ has a random cluster representation (RCR) with base $\nu$ if, for all
$\om \in S^n$,

\begin{equation} \label{rcr-def2}
\mu(\om) = \frac{1}{Z} \, \sum_{\eta : \eta \sim \om} \nu(\eta).
\end{equation}
\end{defn}

A hyperedge $b$ is said to be {\em active} (w.r.t. $\eta$) if $\eta_b \neq S^b$.
Two vertices (elements of $[n]$) $v$ and $w$ are said to be {\em neighbours} (w.r.t. $\eta$)
if there is an active hyperedge $b$ such that $v \in b$ and $w \in b$. This notion gives naturally
rise to the notion of {\em clusters}: the cluster of $v$ is the set which consists of $v$ and
all $w \in [n]$ for which there exists a sequence $b_1, \cdots, b_k$ of active hyperedges
such that: $v \in b_1$, $w \in b_k$, and $b_i \cap b_{i+1} \neq \emptyset$, $1 \leq i \leq k-1$.
To emphasize that these notions depend on $\eta$, we speak of $\eta$-active, $\eta$-cluster etcetera.

\smallskip\noindent
{\bf Remarks:} \\
{\it
(i) This notion of random cluster representation is an abstraction of the usual
notion in the literature. To indicate the correspondence with the usual notion, consider as an
example the ferromagnetic Ising measure \eqref{fim-def}, with all $h_i$'s equal to $0$, and each $J_{i,j}$ either $0$
or $J$. The usual
random cluster model for this measure, introduced
by Fortuin and Kasteleyn (see e.g. \cite{FK72}; see also \cite{G06} and Chapter 10 in \cite{G10})
assigns to each edge (pair of vertices
$i, j$ for which $J_{i,j} \neq 0$) the value `open' or `closed'. The two endpoints of an open edge
`receive' the same spin value (i.e. both are $+1$ or both are $-1$). In the language of our
definition above, this is the same as assigning to the edge $(i,j)$ the `value'
$\{(-1,-1), (+1, +1)\}$ (which corresponds to being `open'), or the value
$\{-1,+1\}^b$ (which corresponds to being `closed'). The {\em base} $\nu$ in Definition \ref{def-RCR} is,
in this special case, a Bernoulli measure: each edge $b = \{i,j\}$ with $i \neq j$ and $J_{i,j} >0$ has,
independently of the other edges, $\eta_b$ equal to $\{(-1,-1), (+1, +1)\}$ with probability $p$, and
equal to $\{-1,+1\}^b$ with probability $1-p$ (where $p = 1 - \exp(- 2 J)$ as in the Fortuin-Kasteleyn
(FK) random-cluster measure).
The `extra' factor ($2$ to the number of open clusters) in the FK random-cluster measure is missing in
the equation for $\nu$. This makes our computations involving $\nu$ more elegant but has
no fundamental consequences. \\
(ii) Typically a measure $\mu$ on $S^n$ has more than one random cluster representation.
For instance, taking
$$ \nu(\eta) = \begin{cases} \mu(\om), & \mbox{ if } \eta_{[n]} = \{\om\} \mbox{ and } \eta_b = S^b \mbox{ for all } b \neq [n] \\
0, & \mbox{ otherwise } \end{cases}
$$
gives an RCR of $\mu$ which is trivial and not useful.

\noindent
(iii) Generalized random-cluster representations are not only useful for the purposes in this paper but
also interesting in themselves. Several properties will be studied in more detail in the separate paper \cite{Ga12}.

\noindent
(iv) For $q$-state Potts models on the triangular lattice a generalization of the usual random-cluster model
was obtained by Chayes and Lei (see Section 3.1 in \cite{CL06}).
}
\end{subsection}

\begin{subsection}{Foldings}\label{sect-foldings}
Let $M \subset [n]$ and $\al \in S^M$. Further, let $\beta, \gamma \in S^{M^c}$ be such that
$\beta_i \neq \gamma_i$ for all $i \in M^c$.
Finally, let $\mu$ be a distribution on $S^n$.
The following notion, but less explicitly and less generally, and not with this terminology,
plays an important role in
\cite{R00}, \cite{BJ11}.

\smallskip\noindent
\begin{defn} \label{def-folding}
The $(\al;\be,\ga)$-folded version of $\mu$ is the 
probability measure on $\prod_{i \in M^c} \{\be_i, \ga_i\}$
given by

\begin{equation} \label{fold-def}
\mabg(\om) = \frac{1}{Z} \mu(\al \circ \om) \, \mu(\al \circ \bar\om), \,\,\,\,
\om \in \prod_{i \in M^c} \{\be_i, \ga_i\},
\end{equation}
where $\bar\om$ is the unique element of $\prod_{i \in M^c} \{\be_i, \ga_i\}$ with the property
that, for each $i \in M^c$, ${\bar\om}_i \neq \om_i$. 
We call $M$ the {\em locked area} of the folding.
\end{defn}

\smallskip\noindent
{\bf Remarks:} \\
{\it
(i) Instead of $(\al;\be,\ga)$-folded version we will often say $(\al;\be,\ga)$-folding (or, simply, folding).\\
(ii) Although the definition of $\bar\om$ depends on $\be$ and $\ga$ we do not show this in our notation;
it will always be clear from the context which $\be$ and $\ga$ are meant.
We will also use this notation in more generality: if $V \subset M^c$ and
$\om \in \prod_{i \in V} \{\be_i, \ga_i\}$, $\bar\om$ is the unique element of
$\prod_{i \in V} \{\be_i, \ga_i\}$ with the property that, for each $i \in V$, ${\bar\om}_i \neq \om_i$.
And, if $F \subset \prod_{i \in V} \{\be_i, \ga_i\}$, $\bar F$ is defined as the set $\{\bar \om \, : \, \om \in F \}$. \\
(iii)
It is also obvious from
the definition that if, for some indices
$i$, we replace $\be_i$ by $\ga_i$ and vice versa, this does not change the measure $\mabg$. 
In particular, if the set $S$ has only two elements, the choice of $\be$ and $\ga$ is immaterial and therefore we
simply write $\mu^{(\al)}$ in that case.\\
(iii) We will be interested in random-cluster representations of foldings.
Note that the base $\nu$ of such a representation is
a probability measure on 
$$\prod_{b \subset M^c} \calP(\prod_{i \in b} \{\be_i, \ga_i\}).$$
}
\end{subsection}
\begin{subsection}{A general form of restricted disjoint occurrence}
For $A, B \subset S^n$ and $\om \in S^n$, we define the set of disjoint-occurrence pairs
$\calD(A,B,\om)$ as follows:

\begin{equation}\label{D-def}
\calD(A,B,\om) = \{(K,L) \, : \, K, L \subset [n], K \cap L = \emptyset, [\om]_K \subset A,
[\om]_L \subset B\}.
\end{equation}
Note that $A \square B$ can be written as

\begin{equation}\label{square-def-alt}
A \square B = \{\om \in S^n \, : \, \calD(A,B,\om) \neq \emptyset\}.
\end{equation}

Restricted forms of the disjoint-occurence operation can be obtained by replacing, in the r.h.s. of 
\eqref{square-def-alt}, $\calD(A,B,\om)$ by a subset. We already did this in Section \ref{sect-fi} 
and Section \ref{sect-potts} for Ising and Potts models, and in Section \ref{sect-gibbs} for other Gibbs measures.
More generally, let $\Psi$ be a map which
assigns to each triple $(A,B,\om)$ (where $A, B \subset S^n$ and $\om \in S^n$) a (possibly empty)
subset of $\calD(A,B,\om)$. Such a map will be called a {\em selection rule}. 
Now define the $\Psi$-restricted disjoint occurrence operation as follows:

\begin{equation}\label{psi-square-def}
A \psq B  = \{\om \in S^n \, : \, \Psi(A, B, \om) \neq \emptyset\}.
\end{equation}

\noindent
This definition depends of course on the selection rule $\Psi$. Although this dependence is not
visible in the notation $A \psq B$, it will always be clear from the context to which selection rule
it refers. In fact, in the section on the ferromagnetic Ising model and that on Gibbs measures, we already used this notation (see \eqref{box-im-def} and \eqref{box-gibbs-def}, respectively), which, as we will see in Section \ref{gibbs-proofs},
corresponds to certain particular choices of the selection rule.
Note that if for $\Psi$ we take the `obvious' selection rule $\Psi(A,B, \om) = \calD(A, B, \om)$,
then $A \psq B$ is simply $A \square B$. \\
Our general theorem for (restricted) disjoint-occurrence is the following.

\begin{thm} \label{bk-gen}
Let $A, B \subset S^n$,  $\Psi$ a selection rule and $\mu$ a probability
measure on $S^n$. If, for each $M \subset [n]$, each $\al \in S^M$ and all $\be, \ga \in S^{M^c}$ with
$\be_i \neq \ga_i$ for all $i \in M^c$, the
folding $\mu^{(\al; \be, \ga)}$ has a random-cluster representation with base $\nu^{(\al; \be, \ga)}$ 
which satisfies conditions (i) and (ii) below, then
\begin{equation} \label{bk-gen-eq}
\mu(A \psq B) \leq \mu(A) \mu(B).
\end{equation}

\smallskip\noindent
{\em Condition (i) (Symmetry):} 
For $\nu^{(\al; \be, \ga)}$-almost every $\eta$ and each $b \subset M^c$, 
$\eta_b = \bar \eta_b$. \\ 
{\em Condition (ii) (Separation):}
For all $\om \in \prod_{i \in M^c} \{\be_i, \ga_i\}$ for which $\Psi(A,B,\om \circ \al) \neq \emptyset$, there is
a pair $(K,L) \in \Psi(A,B,\al \circ \om)$
such that for
$\nu^{(\al; \be, \ga)}$-almost all $\eta \sim \om$ there is no element of $K \cap M^c$ that belongs
to the same $\eta$-cluster as an element of $L \cap M^c$. 

\end{thm}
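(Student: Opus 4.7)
The plan is to use an agreement-set decomposition of ordered pairs to turn the inequality into a per-$\eta$ combinatorial statement, which will then follow from Reimer's Proposition~\ref{reimer} applied inside the cluster-flip orbits of $\eta$.

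First, I parameterize each ordered pair $(\omega,\omega')\in S^n\times S^n$ by its agreement set $M=\{i:\omega_i=\omega_i'\}$, the common restriction $\alpha=\omega_M$, the folding data $f:M^c\to\binom{S}{2}$ with $f(i)=\{\omega_i,\omega_i'\}$, and the folding-space configuration $\sigma:=\omega_{M^c}\in\Omega^F:=\prod_{i\in M^c}f(i)$; note $\omega'_{M^c}=\bar\sigma$. Writing $\mu^F:=\mu^{(\alpha;\beta,\gamma)}$ with normalizer $Z^F$, the identity $\mu(\omega)\mu(\omega')=Z^F\mu^F(\sigma)$, combined with $\sum_{\omega'}\mu(\omega')=1$, gives
\begin{align*}
\mu(A)\mu(B) &=\sum_{M,\alpha,f} Z^F\,\mu^F(A^F\cap\overline{B^F}),\\
\mu(A\psq B) &=\sum_{M,\alpha,f} Z^F\,\mu^F((A\psq B)^F),
\end{align*}
where $A^F:=\{\sigma\in\Omega^F:\alpha\circ\sigma\in A\}$ (and analogously for $B^F$, $(A\psq B)^F$). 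So it suffices to prove, for each folding, the inequality $\mu^F((A\psq B)^F)\le\mu^F(A^F\cap\overline{B^F})$. Substituting the RCR $\mu^F(E)=(Z^F_\nu)^{-1}\sum_\eta\nu^F(\eta)|\{\sigma\sim\eta:\sigma\in E\}|$ further reduces this, for each $\eta$ in the support of $\nu^F$, to the combinatorial bound
\[
|\{\sigma\sim\eta:\alpha\circ\sigma\in A\psq B\}|\;\le\;|\{\sigma\sim\eta:\alpha\circ\sigma\in A,\ \alpha\circ\bar\sigma\in B\}|.
\]

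By the Symmetry condition ($\eta_b=\bar\eta_b$), flipping $\sigma$ on any single $\eta$-cluster preserves compatibility with $\eta$; the resulting free action of $(\mathbb{Z}/2)^{\mathcal{C}(\eta)}$ partitions $\{\sigma\sim\eta\}$ into orbits $O$ of size $2^{|\mathcal{C}(\eta)|}$, each in natural bijection with $\{0,1\}^{\mathcal{C}(\eta)}$ under which $\Omega^F$-conjugation $\sigma\mapsto\bar\sigma$ becomes full bit-flipping. On each orbit $O$, set $\tilde A:=A^F\cap O$ and $\tilde B:=B^F\cap O$, viewed as subsets of $\{0,1\}^{\mathcal{C}(\eta)}$. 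For $\sigma\in(A\psq B)^F\cap O$, the Separation condition furnishes $(K,L)\in\Psi(A,B,\alpha\circ\sigma)$ with $K\cap M^c$ and $L\cap M^c$ lying in disjoint $\eta$-clusters; take $\tilde K$ (resp.\ $\tilde L$) to be the set of clusters meeting $K\cap M^c$ (resp.\ $L\cap M^c$). Any $\sigma'\in[\sigma]_{\tilde K}$ then satisfies $\sigma'_{K\cap M^c}=\sigma_{K\cap M^c}$, whence $(\alpha\circ\sigma')_K=(\alpha\circ\sigma)_K$ and so $\sigma'\in\tilde A$ via $[\alpha\circ\sigma]_K\subset A$; the same argument gives $[\sigma]_{\tilde L}\subset\tilde B$. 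Thus $\sigma\in\tilde A\square\tilde B$ in $\{0,1\}^{\mathcal{C}(\eta)}$, and Proposition~\ref{reimer} yields
\[
|(A\psq B)^F\cap O|\le|\tilde A\square\tilde B|\le|\tilde A\cap\overline{\tilde B}|=|A^F\cap\overline{B^F}\cap O|.
\]
Summing over orbits, then over $\eta$, then over foldings completes the proof.

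The main obstacle is the transfer from the $\Psi$-witness $(K,L)\subset[n]$ to the cluster-space witness $(\tilde K,\tilde L)\subset\mathcal{C}(\eta)$: the Separation condition has been tailored so that its disjointness requirement concerns only $K\cap M^c$ and $L\cap M^c$, which is exactly what the argument needs, since the locked coordinates on $M$ take the same value $\alpha$ in both $\alpha\circ\sigma$ and $\alpha\circ\bar\sigma$ and hence contribute identically to the membership tests for $A$ and $B$.
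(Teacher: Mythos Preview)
Your proof is correct and follows essentially the same route as the paper's: the agreement-set decomposition of $S^n\times S^n$ into the sets $W^{(\alpha;\beta,\gamma)}$, the passage to the folding $\mu^{(\alpha;\beta,\gamma)}$, the RCR expansion reducing to a per-$\eta$ cardinality inequality, the use of Condition~(i) to obtain cluster-flip orbits identified with $\{0,1\}^{\mathcal{C}(\eta)}$, the use of Condition~(ii) to push the $\Psi$-witness $(K,L)$ to a $\square$-witness $(\tilde K,\tilde L)$ at the cluster level, and finally Reimer's Proposition~\ref{reimer}. Your orbit language is slightly cleaner than the paper's basepoint formulation, but the argument is the same.
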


\begin{proof}
The proof highly generalizes the overall structure of the proofs of Theorem \ref{bkr} and Theorem \ref{bkj} from
Proposition \ref{reimer} in \cite{R00} and \cite{BJ11} respectively, and combines
it with the notion of random-cluster representations. \\
It is clear that \eqref{bk-gen-eq} can be written as

\begin{equation} \label{bk-gen-eq-alt}
(\mu \times \mu)\left(\left(A \psq B\right) \times S^n\right) \leq (\mu \times \mu) (A \times B).
\end{equation}

\noindent
Let, for each $M \subset [n]$, each $\al \in S^M$, and all pairs $\be, \ga \in S^{M^c}$ with
$\be_i \neq \ga_i$, $i \in M^c$,

$$W^{(\al; \be, \ga)} := \{(\al \circ \om, \al \circ \bar\om) \, : \, \om \in \prod_{i \in M^c}\{\be_i, \ga_i\}\}.$$

It is clear that two such sets are either equal to each other or disjoint. Moreover, the union of all such sets is
$S^n \times S^n$. Hence, to prove \eqref{bk-gen-eq-alt} it is sufficient to prove that, for each of the above
mentioned sets $W^{(\al; \be, \ga)}$, 

\begin{equation} \label{bk-gen-eq-alt2}
(\mu \times \mu) \left(\left(\left(A \psq B \right) \times S^n \right) \cap W^{(\al; \be, \ga)} \right)
\leq (\mu \times \mu) \left(\left(A \times B\right) \cap W^{(\al; \be, \ga)}\right).
\end{equation}

By the definition of `foldings' (see \eqref{fold-def}), and that of random-cluster representations
(see \eqref{rcr-def2}) the l.h.s. of \eqref{bk-gen-eq-alt2} is equal to

\begin{eqnarray} \label{gen-alt-lhs}
\,&\,& \sum_{\om \in \prod_{i \in M^c} \{\be_i, \ga_i\} \, : \, \al \circ \om \in A \psq B}
\mu(\al \circ \om) \mu(\al \circ \bar\om) \\ \nonumber
&=& Z \, \sum_{\om \in \prod_{i \in M^c} \{\be_i, \ga_i\} \, : \, \al \circ \om \in A \psq B} \mu^{(\al; \be, \ga)}(\om)
\\ \nonumber
&=& \frac{Z}{Z'} \sum_{\eta} \nu^{(\al; \be, \ga)}(\eta) \,
\left\vert \left\{ \om \in \prod_{i \in M^c} \{\be_i, \ga_i\} \, : \, \om \sim \eta, \, \al \circ \om \in A \psq B \right\}
\right\vert,
\end{eqnarray}
where $Z$ corresponds to the normalizing factor in \eqref{fold-def}, and $Z'$ with the normalizing factor in
\eqref{rcr-def2}. 

\smallskip\noindent

\noindent
Similarly, the r.h.s. of \eqref{bk-gen-eq-alt2} is equal to
\begin{eqnarray} \label{gen-alt-rhs}
\,&\,& \sum_{\om \in \prod_{i \in M^c} \{\be_i, \ga_i\} \, : \, (\al \circ \om) \in A, \, (\al \circ \bar\om) \in B}
\mu(\al \circ \om) \mu(\al \circ \bar\om) \\ \nonumber
&=& Z \, \sum_{\om \in \prod_{i \in M^c} \{\be_i, \ga_i\} \, : \, (\al \circ \om) \in A, \, (\al \circ \bar\om) \in B}
\mu^{(\al; \be, \ga)}(\om) \\ \nonumber
&=& \frac{Z}{Z'} \sum_{\eta} \nu^{(\al; \be, \ga)}(\eta) \,
\left\vert \left\{ \om \in \prod_{i \in M^c} \{\be_i, \ga_i\} \, : \, \om \sim \eta, \,
\al \circ \om \in A, \, \al \circ \bar\om \in B \right\} \right\vert.
\end{eqnarray}
The theorem now follows if, for each $\eta$ with $\nu^{(\al; \be, \ga)}(\eta) >0$, the cardinality
of the set of $\omega$'s in the last line of \eqref{gen-alt-lhs} is smaller than or equal to that
in the last line of \eqref{gen-alt-rhs}. The following lemma states that this is indeed the case.

\begin{lem} \label{cardinality-lem}
Let $\eta$ be such that $\nu^{(\al; \be, \ga)}(\eta) >0$. Then

\begin{eqnarray} \label{cardinality-ineq}
\, &\,& \left\vert \left\{ \om \in \prod_{i \in M^c} \{\be_i, \ga_i\} \, :
\, \om \sim \eta, \, \al \circ \om \in A \psq B \right\} \right\vert \\ \nonumber
&\leq& 
\left\vert \left\{ \om \in \prod_{i \in M^c} \{\be_i, \ga_i\} \, : \, \om \sim \eta, \,
\al \circ \om \in A, \, \al \circ \bar\om \in B \right\} \right\vert.
\end{eqnarray}
\end{lem}

\begin{proof}
{\em (of Lemma \ref{cardinality-lem})}
Let $C_1, C_2, \cdots, C_k$ denote the $\eta$-clusters. (So, in particular,
$(C_1, \cdots, C_k)$ is a partition of $M^c$).
From Condition (i) in Theorem \ref{bk-gen} (and the definition of $\eta$-clusters) it follows that if $\om \sim \eta$,
and $\si \in \prod_{i \in M^c} \{\be_i, \ga_i\}$ satisfies 
$\sigma_{C_i} \in \{\om_{C_i}, {\bar\om}_{C_i}\}$ for all $i = 1, \cdots, k$, then also $\si \sim \eta$.
Therefore it is sufficient to show that, for each $\om \sim \eta$,

\begin{eqnarray} \label{card-ineq2}
\, &\,& \left\vert \left\{ \si \in \prod_{1 \leq i \leq k} \left\{\om_{C_i}, {\bar \om}_{C_i} \right\} \, : \,
\al \circ \si \in A \psq B \right \} \right\vert \\ \nonumber
&\leq& 
\left\vert \left\{ \si \in \prod_{1 \leq i \leq k} \left\{\om_{C_i}, {\bar \om}_{C_i} \right\} \, : \,
\al \circ \si \in A, \, \al \circ {\bar\si} \in B \right \} \right\vert.
\end{eqnarray}

\noindent
Consider the map 
$T : \prod_{1 \leq i \leq k} \{\om_{C_i}, {\bar \om}_{C_i}\} \rightarrow \{0,1\}^k$, defined by

$$(T(\si))_i = \begin{cases} 1, & \mbox{ if } \si_{C_i} = \om_{C_i} \\
0, & \mbox{ if } \si_{C_i} = {\bar \om}_{C_i} \end{cases} $$
This map is clearly a $1-1$ map, and 
\begin{eqnarray} \label{T-prop}
T(\bar\si) = \overline{T(\si)},\,\, \si \in \prod_{1 \leq i \leq k} \{\om_{C_i}, {\bar\om}_{C_i}\}.
\end{eqnarray}
Now let
\begin{eqnarray}\label{T-claim-def}
D := T\left(\left\{\si \in \prod_{1 \leq i \leq k} \left\{\om_{C_i}, {\bar \om}_{C_i}\right\} \, : \,
\al \circ \si \in A\right\}\right), \\ \nonumber
E := T\left(\left\{\si \in \prod_{1 \leq i \leq k} \left\{\om_{C_i}, {\bar \om}_{C_i}\right\} \, : \,
\al \circ \si \in B\right\}\right).
\end{eqnarray}

\smallskip\noindent
{\bf Claim}\\
{\em
(i) The image under $T$ of the set in the l.h.s. of \eqref{card-ineq2} is contained in $D \square E$. \\
(ii) The image under $T$ of the set in the r.h.s. of \eqref{card-ineq2} is equal to $D \cap {\bar E}$.
}

\smallskip\noindent
To see that the first claim holds, let $\si$ be an element of the set in the l.h.s. of \eqref{card-ineq2}.
Let
$$A(\al) := \{\de \in S^{M^c} \, : \, \al \circ \de \in A\},$$
and define $B(\al)$ analogously.
By the definition of $\Psi(A,B,\al \circ \si)$ and Condition (ii), it follows that there
are disjoint subsets $\{i_1, \cdots, i_l\}$ and $\{j_1, \cdots, j_m\}$ of $\{1, \cdots, k\}$
such that 
\begin{equation}\label{claim-proof-eq1}
\left[\si\right]_{C_{i_1} \cup \cdots C_{i_l}} \subset A(\al),
\mbox{ and }
\left[\si\right]_{C_{j_1} \cup \cdots C_{j_m}} \subset B(\al).
\end{equation}
From \eqref{claim-proof-eq1} and the definition of the map $T$ it follows that 
$\left[T(\si)\right]_{\{i_1, \cdots, i_l\}} \subset D$
and 
$\left[T(\si)\right]_{\{j_1, \cdots, j_m\}} \subset E$,
and hence that $T(\si) \in D \square E$. This shows that Claim (i) holds.
To check Claim (ii) is straightforward.

Lemma \ref{cardinality-lem} is now obtained as follows: By Claim (i) and because $T$ is a 1-1 map,
the l.h.s. of \eqref{card-ineq2} is at most $|D \square E|$, which by Proposition \ref{reimer}
is at most $|D \cap {\bar E}|$, which by Claim (ii) (and again the fact that the map $T$ is $1-1$) is
equal to the r.h.s. of \eqref{card-ineq2}.
This completes the proof of Lemma \ref{cardinality-lem}.
\end{proof}
As we saw before, Lemma \ref{cardinality-lem} completes the proof of Theorem \ref{bk-gen}
\end{proof}
\end{subsection}
\end{section}

\begin{section}{RCR for Gibbs measures, and proofs of Theorems \ref{fim-thm} - \ref{gibbs-thm}} \label{gibbs-proofs}
\begin{subsection}{Proof of Theorem \ref{gibbs-thm} from Theorem \ref{bk-gen}}
We start with the following general result for random-cluster representations of Gibbs measures,
which is also interesting in itself.
See Section \ref{sect-gen-rcr} for the definition of RCR and Section \ref{sect-gibbs} for notation and
terminology related to Gibbs measures.

First some definitions: We say that $\eta_b \subset S^b$ is {\em monotone} (w.r.t. the potential $\Phi$) if
$\om \in \eta_b$ and $\Phi_b(\om') \geq \Phi_b(\om)$ implies
$\om' \in \eta_b$. We say that the collection $\eta = (\eta_b, \, b \subset [n])$ is monotone
if each $\eta_b$, $b \subset [n]$, is monotone. Finally, we say that a probability measure $\nu$ on the
set $\prod_{b \subset [n]} \calP(S^b)$ is monotone if it is concentrated on the set of monotone $\eta$'s
(i.e. if $\nu(\eta) = 0$ whenever $\eta$ is not monotone).

\begin{lem}\label{gibbs-rcr-prop}
Let $\Phi$ be a potential for the configuration space $S^n$, as defined in Section 
\ref{sect-gibbs}, and let $\mu$ be the Gibbs measure on $S^n$ for the potential $\Phi$.
Then $\mu$ has a RCR with base $\nu$ given by
\begin{eqnarray} \label{eq-rcr-gibbs}
\,\, &\,& \nu(\eta) = \\ \nonumber
\, &\,& \frac{1}{Z} \prod_{b \subset[n]} \left(\min\{\exp(\Phi_b(\gamma)) \, : \, \ga \in \eta_b\}
- \max\{\exp(\Phi_b(\ga)) \, : \, \ga \notin \eta_b\} \right), \\ \nonumber
 \,\, &\,& \mbox{ if } \eta \mbox{ is monotone, and } 0 \mbox{ otherwise }.
\end{eqnarray}
(In \eqref{eq-rcr-gibbs} we define the maximum over an empty set to be $0$). \\
\end{lem}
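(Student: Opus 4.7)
The plan is to verify the proposed identity directly, reducing it via a product structure to a one-edge telescoping identity. First, I observe that the compatibility condition $\om \sim \eta$, which reads $\om_b \in \eta_b$ for every $b \subset [n]$, decouples across hyperedges. Since the weight prescribed by \eqref{eq-rcr-gibbs} is itself a product over $b$ (with nonmonotone $\eta$ contributing zero), I would write
$$\sum_{\eta \,:\, \eta \sim \om} \nu(\eta) \;=\; \frac{1}{Z} \prod_{b \subset [n]} \sum_{\substack{\eta_b \text{ monotone} \\ \om_b \in \eta_b}} \Bigl(\min_{\ga \in \eta_b} e^{\Phi_b(\ga)} \;-\; \max_{\ga \notin \eta_b} e^{\Phi_b(\ga)}\Bigr).$$
It thus suffices to show, for each $b$, that the inner sum equals $e^{\Phi_b(\om_b)}$; the product of these identities equals $\exp\bigl(\sum_b \Phi_b(\om_b)\bigr)$, which is $\mu(\om)$ up to the global normalization, exactly as required by Definition \ref{def-RCR}.

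The next step is to parametrize the monotone $\eta_b$. By the definition of monotonicity, $\eta_b$ is exactly an upward-closed set with respect to the preorder that $\Phi_b$ induces on $S^b$. Listing the distinct values taken by $\Phi_b$ on $S^b$ as $v_1 < v_2 < \cdots < v_k$ and setting $L_j := \{\ga \in S^b : \Phi_b(\ga) = v_j\}$, the monotone subsets are then precisely the sets
$$\eta_b^{(i)} \;:=\; L_i \cup L_{i+1} \cup \cdots \cup L_k, \qquad i \in \{1, \ldots, k+1\},$$
with $i = k+1$ giving $\eta_b = \emptyset$. For $\om_b \in L_j$, the requirement $\om_b \in \eta_b^{(i)}$ becomes simply $i \leq j$, and on $\eta_b^{(i)}$ the minimum of $e^{\Phi_b(\cdot)}$ equals $e^{v_i}$ while the maximum on the complement equals $e^{v_{i-1}}$ (read as $0$ when $i=1$, i.e.\ when $\eta_b = S^b$, matching the convention stated after \eqref{eq-rcr-gibbs}).

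The inner sum is then a plain telescope: $\sum_{i=1}^{j}(e^{v_i} - e^{v_{i-1}}) = e^{v_j} = e^{\Phi_b(\om_b)}$, which closes the main argument. The routine checks that remain are that each weight in \eqref{eq-rcr-gibbs} is nonnegative (immediate from $v_i \geq v_{i-1}$ on monotone sets, so that $\nu$ is a genuine probability measure after normalization) and that the $i=1$ case of the telescoping sum matches the convention $\max \emptyset = 0$. I do not anticipate any substantial obstacle here; the only conceptual point is the correct identification of monotone subsets of $S^b$ with the upper level sets of $\Phi_b$, after which the telescoping is essentially forced.
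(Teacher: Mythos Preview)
Your argument is correct and is essentially the same as the paper's: both factor $\sum_{\eta \sim \om} \nu(\eta)$ as a product over hyperedges $b$ and then observe that the inner sum over monotone $\eta_b \ni \om_b$ telescopes to $\exp(\Phi_b(\om_b))$. The paper simply asserts that ``in this last sum everything cancels except the term $\exp(\Phi_b(\om_b))$'', whereas you spell out the parametrization of monotone subsets as the upper level sets $\eta_b^{(i)}$ and exhibit the telescoping explicitly; there is no substantive difference in the method.
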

\noindent
{\bf Remark:}
{\em Although
we do not need the explicit form \eqref{eq-rcr-gibbs} for the proof of Theorem \ref{gibbs-thm} (only the monotonicity of $\nu$
is needed), this form may be of interest in itself. Note from this form that $\nu$ is a product measure (where the product is
over all edges $b$ for which $\Phi_b$ is non-constant).
}

\begin{proof}(of Lemma \ref{gibbs-rcr-prop}): \\
We have to show that $\nu$ is indeed the base of an RCR for $\mu$. So Let $\om \in S^n$.
We have

\begin{eqnarray}
\,\, &\,& Z \sum_{\eta \sim \om} \nu(\eta) \\ \nonumber
\, &=& \sum_{\eta}^{*} \prod_b I(\om_b \in \eta_b)
\left(\min\{\exp(\Phi_b(\gamma)) \, : \, \ga \in \eta_b\}
- \max\{\exp(\Phi_b(\ga)) \, : \, \ga \notin \eta_b\} \right)  \\ \nonumber
\, &=& \prod_b \sum_{\beta \subset S^b \, : \, \om_b \in \beta}^{*}
\left(\min\{\exp(\Phi_b(\gamma)) \, : \, \ga \in \beta \}
- \max\{\exp(\Phi_b(\ga)) \, : \, \ga \notin \beta \} \right),
\end{eqnarray}
where the symbol $*$ in the second line indicates that the sum is over all monotone
$\eta$ (in the set  $\in \calP(\prod_{b \subset [n]} S^b)$, and the $*$
in the third line indicates that the sum is over monotone $\beta$.
It is easy to see (from the monotonicity property of $\beta$) that in this last sum everything cancels
except the term $\exp(\Phi_b(\om_b))$.
Hence
$$\sum_{\eta \sim \om} \nu(\eta) = \frac{1}{Z} \prod_b \exp(\Phi_b(\om_b)) = \frac{1}{Z'} \mu(\om),$$
which completes the proof.

\end{proof}

\medskip\noindent
Now we start with the proof of Theorem \ref{gibbs-thm}. 
\begin{proof} (of Theorem \ref{gibbs-thm}). \\
Let $\Phi$ and $\mu$ be as in the statement of the theorem.
First note that the definition of $A \boxminus B$ in Theorem \ref{gibbs-thm} is consistent with the
general definition \eqref{psi-square-def} of the $\boxminus$ operation. To see this, we just take the selection rule
$\Psi$ as follows.
$$\Psi(A, B, \om) = \{(K,L) \subset [n] \, : \, [\om]_K \subset A, \, [\om]_L \subset B, \, C(K) \cap C(L) = \emptyset\},$$
with $C(K)$ as defined in the paragraph below \eqref{bad-def}.

Let $M \subset [n]$, $\al \in S^M$, and $\be, \ga \in S^{M^c}$ with $\be_i \neq \ga_i, \, i \in M^c$.
Recall the definition of the folded measure
$\mu^{(\al; \be, \ga)}$ in \eqref{fold-def}. By \eqref{gibbs-def}, $\mu^{(\al; \be, \ga)}$ can be written as

\begin{eqnarray} \nonumber
\,\, &\,& \mu^{(\al; \be, \ga)}(\om) \\ \nonumber
\,\, &=& \frac{\exp\left[\sum_{b \subset M^c}\left(\Phi_b\left(\om_b\right)
+ \Phi_b\left({\bar\om}_b\right) + \sum_{b' \subset M} \left(\Phi_{b' \cup b}\left(\al_{b'} \circ \om_b\right)
+ \Phi_{b' \cup b}\left(\al_{b'}\circ {\bar\om}_b \right)\right)\right)\right]}{Z}, 
\end{eqnarray}
for $\om \in \prod_{i \in M^c} \{\be_i, \ga_i\}$.

\noindent
From this form it is clear that $\mu^{(\al; \be, \ga)}$ is the Gibbs measure
with the following potential $\tilde \Phi$:

\begin{equation} \label{gibb-fi-til}
{\tilde \Phi}_b(\om) = \sum_{b \subset b' \subset b \cup M} \Phi\left(\left(\al\circ\om\right)_{b'}\right)
+ \Phi\left(\left(\al\circ{\bar\om}\right)_{b'}\right), \,\,\,\, \om \in \prod_{i \in M^c} \{\be_i, \ga_i\}.
\end{equation}

\noindent
Note that
\begin{equation} \label{fi-symm}
{\tilde \Phi}_b(\om) = {\tilde \Phi}_b({\bar\om}), \,\,\,\,\om \in \prod_{i \in M^c} \{\be_i, \ga_i\}.
\end{equation}

Since $\mu^{(\al; \be, \ga)}$  is the Gibbs measure for the potential $\tilde \Phi$, we have by Lemma
\ref{gibbs-rcr-prop} that it has an RCR with base $\nu^{(\al; \be, \ga)}$ which is monotone w.r.t. $\tilde \Phi$.
To prove Theorem \ref{gibbs-thm} it is sufficient to show that this RCR satisfies Conditions (i) and (ii)
in the statement of Theorem \ref{bk-gen}. Condition (i) follows immediately from the above mentioned monotonicity
property of $\nu^{(\al; \be, \ga)}$ and from the symmetry property \eqref{fi-symm}. \\
Now we show that Condition (ii) also holds:
Let $\om \in \prod_{i \in M^c} \{\be_i, \ga_i\}$ and let $(K,L) \in \Psi(A, B, \om\circ\al)$.
Hence (by the way we chose $\Psi$) 

\begin{equation}\label{ckcl-disj}
C(K) \cap C(L) = \emptyset.
\end{equation}

Let $\eta \sim \om$ be such that $\nu^{(\al; \be, \ga)}(\eta) > 0$.
It is sufficient to show that no element of $K \cap M^c$ belongs to the same $\eta$-cluster as an element of $L \cap M^c$.
To do this, it is, by \eqref{ckcl-disj} sufficient to show that every $b \subset M^c$ which satisfies

\begin{equation} \label{b-property}
b \cap C(K) \neq \emptyset \mbox{ and } b \cap (C(K))^c \neq \emptyset,
\end{equation}
is inactive (w.r.t. $\eta$). \\
So, let $b \subset M^c$ satisfy \eqref{b-property}. Let $b' \subset [n]$ be such that $b' \supset b$.
From the definition of $C(K)$ it follows that $b'$ is inefficient (w.r.t.
$\al \circ \om$).
From the definition \eqref{bad-def} of `inefficient' it follows (by substituting in \eqref{bad-def} $b$ by $b'$,
$\om_b$ by $(\al \circ \om)_{b'}$, $\si$ by $((\al \circ {\bar \om})_{b'}$, and $N$ by
$\{ i \in b' \setminus M \, : \, \de_i = \om_i \}$) that

$$\Phi((\al\circ\om)_{b'}) + \Phi((\al \circ {\bar\om})_{b'}) \leq \Phi((\al\circ\de)_{b'}) + \Phi((\al \circ {\bar\de})_{b'}),$$
for all $\de \in \prod_{i \in b' \setminus M} \{\be_i, \ga_i\}$.
Applying this to each term in the r.h.s. of \eqref{gibb-fi-til} gives 
$${\tilde\Phi}_b(\om) \leq {\tilde\Phi}_b(\de), \,\,\, \mbox{ for all } \de \in \prod_{i \in b} \{\be_i, \ga_i\}.$$
From this (and because $\eta_b$ is monotone and $\om_b \in \eta_b$) it follows immediately that 
each $\de \in \prod_{i \in b} \{\be_i, \ga_i\}$ belongs to $\eta_b$. Hence $b$ is not active. \\
As explained above, this completes the proof of Theorem \ref{gibbs-thm}. \end{proof}

\end{subsection}
\begin{subsection}{Proof of Theorem \ref{fim-thm} from Theorem \ref{gibbs-thm}} \label{sect-fim-proof}
\begin{proof}
The Ising distribution \eqref{fim-def} is clearly a Gibbs measure with respect to the potential $\Phi$ given by:

\begin{eqnarray} \label{fim-po}
\Phi_b(\om_b) = \begin{cases} h_b \om_b, & \mbox{ if } |b| =1 \\ 
J_{i j} \om_i \om_j, & \mbox{ if } b \mbox{ is of the form } \{i, j\}, \mbox{ where } i, j \in [n], i \neq j \\ 
0, & \mbox{ otherwise}
\end{cases}
\end{eqnarray}

Let $b \subset [n]$ and $\om \in \{-1,+1\}^n$. Suppose $b$ is not inefficient w.r.t. $\om$ (in the sense of
definition \eqref{bad-def}, with
$\Phi$ as in \eqref{fim-po}). It then follows from the definitions that then $b$ is of the form
$\{i,j\}$ for some $i, j \in [n]$ with $i \neq j$ and $J_{i j} > 0$, and, moreover, that for all $x, y \in \{-1, +1\}$

$$\om_i \om_j + x y > \om_i x + \om_j y.$$
Hence $\om_i = \om_j$.

\noindent
Vice versa, if $J_{i j} > 0$ and $\om_i = \om_j$ then it follows similarly that $\{i,j\}$ is not inefficient.
This shows that the notion of clusters in Section \ref{sect-gibbs}, (with $\Phi$ given by \eqref{fim-po}) is
the same as that in Section \ref{sect-fi}. But then the meaning of $\boxminus$ in the two sections is also the same, and
Theorem \ref{fim-thm} is a special case of Theorem \ref{gibbs-thm}. 
\end{proof}
\end{subsection}
\begin{subsection}{Proof of Theorem \ref{afp-thm} from Theorem \ref{gibbs-thm}} \label{afp-proof}
\begin{proof}
The argument is quite similar to that for the ferromagnetic Ising model in the previous section.
First note that the antiferromagnetic Potts measure, \eqref{afp-def} with all $J_{i j}$'s non-positive,
is a Gibbs measure on $S^n$ with potential function $\Phi$ given by

\begin{eqnarray} \label{afp-po}
\Phi_b(\om_b) = \begin{cases} 
J_{i j} I_{\om_i = \om_j}, & \mbox{ if } b \mbox{ is of the form } \{i, j\}, \mbox{ with } i, j \in [n], i \neq j \\ 
0, & \mbox{ otherwise}
\end{cases}
\end{eqnarray}

We will use the notion of {\em inefficient} in the sense of definition \eqref{bad-def}, with
$\Phi$ as in \eqref{afp-po}.

Let $b \subset [n]$ and $\om \in S^n$. It follows immediately from the definitions that if $|b| \neq 2$ or
$b$ is of the form $\{i,j\}$, $i \neq j$, with $J_{i j} = 0$, then $b$ is inefficient.

Now suppose that $b$ is of the form $\{i, j\}$, $i \neq j$, with $J_{i j} < 0$ and that
$\om_i = \om_j$. We claim that in that case $b$ is also inefficient. If this claim holds, the above considerations imply
that if two vertices are in different clusters in the sense of Section \ref{sect-potts}, then they also are in
different clusters in the sense of Section \ref{sect-gibbs}. That, in turn, implies that $A \boxminus B$ as
defined in the former section is contained in $A \boxminus B$ as defined in the latter section, so that
Theorem \ref{afp-thm} follows indeed from Theorem \ref{gibbs-thm}. 
By the definition of `inefficient' and the form of the potential $\Phi$ in \eqref{afp-po}, to prove the claim it suffices
to show that (recall that $J_{i j} < 0$) for all $x, y \in S$ 

$$I_{\om_i = \om_j} + I_{x = y} \geq I_{\om_i = y} + I_{\om_j = x}.$$

\noindent
This last inequality can be checked straightforwardly: Since the first term in the l.h.s. is $1$, the
inequality can only fail if the r.h.s equals $2$, i.e. if both terms in the r.h.s. are $1$. However, it
follows immediately that in that case $\om_i$, $\om_j$, $x$ and $y$ are all equal, so that the l.h.s. is also equal to $2$.

This completes the proof of the claim, and thus that of Theorem \ref{afp-thm}.
\end{proof}
\end{subsection}
\end{section}
\begin{section}{Permutation invariance and proof of Theorem \ref{afcw-thm}} \label{afcw-proof}
We first state the following corollary of the general Theorem \ref{bk-gen}.
Recall from Section \ref{sect-foldings} that if $\mu$ is a distribution on $\{0,1\}^n$, $M \subset [n]$ and
$\al \in \{0,1\}^M$, the base of an RCR of the folding $\mu^{(\al)}$ is a distribution on the set of all
$\eta$ of the form $(\eta_b, \, b \subset M^c)$ with each $\eta_b$ a subset of $\{0,1\}^b$.

\begin{cor} \label{cor-perm-bk}
Let $\mu$ be a probability distribution on $\{0,1\}^n$ such that for each $M \subset [n]$ and each $\al \in \{0,1\}^M$,
the folding $\mu^{(\al)}$ has a random-cluster representation with base $\nu^{(\al)}$ such that $\nu^{(\al)}$-almost every
$\eta$ has the following two properties, (a) and (b) below.

\smallskip\noindent
Property (a): Every $\eta$-active $b$ has $|b| = 2$ and $\eta_b = \{(0,1), \, (1,0) \}$. \\
Property (b): If $b$ and $b'$ are $\eta$-active, then $b=b'$ or $b \cap b' = \emptyset$.

\smallskip\noindent
Then $\mu$ has the BK-property.
\end{cor}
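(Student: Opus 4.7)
The plan is to apply Theorem~\ref{bk-gen} with the trivial selection rule $\Psi(A,B,\om) := \calD(A,B,\om)$, so that $A \psq B = A \square B$, and then check the two conditions of that theorem for the given random-cluster bases $\nu^{(\al)}$. (Since $S = \{0,1\}$, the folding is determined by $\al$ alone, as noted in the remark after Definition~\ref{def-folding}.) Fix $M \subset [n]$ and $\al \in \{0,1\}^M$. Condition~(i) is immediate: by Property~(a), for $\nu^{(\al)}$-almost every $\eta$, every $\eta$-active $b$ has $\eta_b = \{(0,1),(1,0)\}$, which is manifestly invariant under the coordinate-flip $\bar{\cdot}$; and every inactive $b$ has $\eta_b = \{0,1\}^b$, which is trivially symmetric.

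For Condition~(ii), the key idea is to restrict the witnessing sets to $1$-valued coordinates, using the increasingness of $A$ and $B$. Let $\om \in \{0,1\}^{M^c}$ with $\al \circ \om \in A \square B$, and pick any $(K,L) \in \calD(A,B,\al\circ\om)$. Set
\begin{equation*}
K' := \{ i \in K \, : \, (\al\circ\om)_i = 1 \}, \qquad L' := \{ i \in L \, : \, (\al\circ\om)_i = 1 \}.
\end{equation*}
For increasing $A$, the condition $[\al\circ\om]_K \subset A$ is equivalent to $\tilde\om \in A$, where $\tilde\om$ agrees with $\al\circ\om$ on $K$ and is $0$ elsewhere; the same minimal configuration $\tilde\om$ arises from $K'$ (the coordinates we threw away had value $0$ and so contributed nothing), so $[\al\circ\om]_{K'} \subset A$. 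The analogous statement holds for $L'$, and obviously $K' \cap L' \subset K \cap L = \emptyset$, so $(K',L') \in \calD(A,B,\al\circ\om)$.

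I then claim that this choice $(K',L')$ separates in every admissible $\eta$. By Properties~(a) and~(b), the $\eta$-active edges form a matching of pairs $\{i,j\} \subset M^c$, and the only $\eta$-clusters of size $\geq 2$ are exactly these pairs. For any active pair $\{i,j\}$ and any $\om \sim \eta$, necessarily $\om_i \neq \om_j$; the coordinate with value $0$ lies in neither $K'$ nor $L'$, so the active edge $\{i,j\}$ cannot connect $K' \cap M^c$ to $L' \cap M^c$. Since all size-$\geq 2$ clusters are of this form, no $\eta$-cluster meets both $K' \cap M^c$ and $L' \cap M^c$, verifying Condition~(ii).

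With both conditions of Theorem~\ref{bk-gen} satisfied, we conclude $\mu(A \square B) \leq \mu(A)\mu(B)$ for all increasing $A,B \subset \{0,1\}^n$, i.e.\ $\mu$ has the BK property. The only delicate step is the passage from $(K,L)$ to $(K',L')$: I expect this to be the real content of the argument, because without the increasingness of $A,B$ an active edge $\{i,j\}$ with $\om_i = 1,\,\om_j = 0$ could legitimately have $i \in K$ and $j \in L$, forcing a single cluster to straddle both witnesses and wrecking Condition~(ii).
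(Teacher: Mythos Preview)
Your proof is correct and follows essentially the same route as the paper. The only cosmetic difference is that the paper encodes the ``restrict to $1$-valued coordinates'' trick into a non-trivial selection rule $\Psi(\om,A,B) = \{(K,L) : K \cap L = \emptyset,\ \om \equiv 1 \text{ on } K \cup L\}$ (and then argues $A \psq B = A \square B$ by increasingness), whereas you keep the trivial selection rule and perform the passage $(K,L) \mapsto (K',L')$ on the fly inside the verification of Condition~(ii); the underlying idea and all the combinatorics are identical.
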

\begin{proof}

Let $A$ and $B$ be increasing subsets of $\{0,1\}^n$. Let $\Psi$ be the following selection rule:

$$\Psi(\om, A ,B) = \{(K,L) \, : \, K, L \subset [n], \, K \cap L = \emptyset, \, \om \equiv 1 \mbox{ on } K \cup L\}.$$
It is easy to see that, since $A$ and $B$ are increasing,
$$A  \psq B = A \square B.$$

\noindent
Recall that if $W$ is a finite set and $\om \in \{0,1\}^W$, we use the notation $|\om|$ for $\sum_{i \in W} \om_i$.
Now let $M \subset [n]$ and $\al \in \{0,1\}^M$. We will show that the base $\nu^{(\al)}$ satisfies
Conditions (i) and (ii) of Theorem \ref{bk-gen}. First of all,
by Property (a) above it follows immediately that
Condition (i) is satisfied.
Further, let $\om \in \{0,1\}^{M^c}$, let $K, L \subset \Psi(A,B, \al \circ \om)$, and
let $\eta$ be such that $\nu^{(\al)}(\eta) >0$ and $\eta \sim \om$.
Suppose that $K \cap M^c$ and $L \cap M^c$ have an element in the same $\eta$-cluster.
From Property (b) in the statement of the Corollary it follows immediately that then there is
an $\eta$-active $b$ such that $K \cap b$ and $L \cap b$ are non-empty. Since $K \cap L = \emptyset$
and $\om \equiv 1$ on $K \cup L$ it follows that $|\om_b| \geq 2$. However,
this gives a contradiction with Property (a) in the statement of the Corollary.
Hence $K \cap M^c$ and $L \cap M^c$ have no element in the same $\eta$-cluster.
So Condition (ii) in Theorem \ref{bk-gen} is also satisfied, and it follows from that theorem that 
$\mu(A \square B) \leq \mu(A) \mu(B).$
\end{proof}

\begin{lem} \label{lem-lin-sol}
Let $\mu$ be a symmetric, permutation-invariant distribution on $\{0,1\}^n$; that is, there are
$p_0, \cdots, p_{\lfloor n/2 \rfloor} \geq 0$ with $\sum_{i=0}^{\lfloor n/2 \rfloor} p_i =1$,
such that for all $\om \in \{0,1\}^n$ with
$|\om| \leq \lfloor n/2 \rfloor$,
$\mu(\om) = \mu(\bar \om) = p_{|\om|}$. Suppose there exist $\xi_j \geq 0$, $j = 0, \cdots, \lfloor n/2 \rfloor$,
such that the following equations hold:

\begin{equation} \label{eq-lin-sol}
p_k = \sum_{j=0}^k a_{k j} \, \xi_j, \,\,\,\, k = 0, 1, \cdots, \lfloor n/2 \rfloor,
\end{equation}
with
\begin{equation} \label{akj-def}
 a_{k j} = \frac{k! (n-k)!}{j! (k-j)! (n-k-j)!}, \,\,\,\, 0 \leq j \leq k \leq \lfloor n/2 \rfloor.
\end{equation}
Then $\mu$ has a random-cluster representation with base $\nu$ such that $\nu$-almost every $\eta$ satisfies
properties (a) and (b) in Corollary \ref{cor-perm-bk}.
\end{lem}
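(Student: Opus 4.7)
The plan is to construct the RCR base $\nu$ explicitly, supported on ``matching'' configurations. For every matching $\pi$ on $[n]$ (that is, a collection of pairwise disjoint $2$-element subsets of $[n]$), define $\eta_\pi$ by
\begin{equation*}
(\eta_\pi)_b = \begin{cases} \{(0,1),(1,0)\}, & b \in \pi, \\ \{0,1\}^b, & \text{otherwise}. \end{cases}
\end{equation*}
The $\eta_\pi$-active hyperedges are exactly the elements of $\pi$, so $\eta_\pi$ satisfies properties (a) and (b) of Corollary \ref{cor-perm-bk} by construction. Hence the task reduces to choosing a weight for each matching in such a way that \eqref{rcr-def2} holds.

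Motivated by the permutation-invariance of $\mu$, I would assign $\nu(\eta_\pi) := c\,\xi_{|\pi|}$ for every matching $\pi$, where $c > 0$ is chosen so that $\nu$ becomes a probability measure on $\prod_b \calP(\{0,1\}^b)$, and set $\nu(\eta)=0$ for every $\eta$ not of this form. Since the $\xi_j$'s are non-negative and at least one is positive (otherwise all $p_k$ would vanish), this $\nu$ is well defined.

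The key verification is to compute $\sum_{\eta \sim \omega}\nu(\eta)$ for $\omega \in \{0,1\}^n$. A matching $\pi$ satisfies $\omega \sim \eta_\pi$ iff every pair of $\pi$ contains exactly one $1$ of $\omega$. So for $\omega$ with $|\omega|=k$, the matchings of size $j$ compatible with $\omega$ are in bijection with the ways to pick $j$ of the $k$ ones of $\omega$, pick $j$ of the $n-k$ zeros of $\omega$, and pair them up, which gives exactly $\binom{k}{j}\binom{n-k}{j}\,j! = a_{kj}$ (understood to be $0$ when $j > n-k$). Therefore
\begin{equation*}
\sum_{\eta \sim \omega} \nu(\eta) \;=\; c\sum_{j \geq 0} a_{|\omega|,j}\,\xi_j .
\end{equation*}

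It remains to compare this with $\mu(\omega)$. For $|\omega| \leq \lfloor n/2 \rfloor$, the hypothesis \eqref{eq-lin-sol} immediately gives $\sum_{\eta \sim \omega}\nu(\eta) = c\,p_{|\omega|} = c\,\mu(\omega)$. For $|\omega| > \lfloor n/2 \rfloor$, the symmetry $a_{k,j} = a_{n-k,j}$, which is immediate from \eqref{akj-def}, reduces the sum to the previous case, and $\mu(\omega)=\mu(\bar\omega)=p_{n-|\omega|}$ yields the same equality. Thus $\sum_{\eta \sim \omega}\nu(\eta) = c\,\mu(\omega)$ for all $\omega$, and with the choice $Z = c$ in \eqref{rcr-def2} we obtain the required RCR. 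No serious obstacle arises; the conceptual content is simply the observation that $a_{kj}$ counts matchings of size $j$ compatible with any weight-$k$ configuration, which makes \eqref{eq-lin-sol} precisely the condition under which $\mu$ can be realized as a mixture of such matching-type random-cluster configurations.
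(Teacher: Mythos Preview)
Your proof is correct and follows essentially the same approach as the paper: both define $\nu$ to assign weight proportional to $\xi_{|\pi|}$ to each matching configuration $\eta_\pi$, verify by direct counting that the number of size-$j$ matchings compatible with a weight-$k$ configuration is $a_{kj}$, and handle $|\omega|>\lfloor n/2\rfloor$ by symmetry (you via $a_{k,j}=a_{n-k,j}$, the paper via $\eta\sim\omega\Leftrightarrow\eta\sim\bar\omega$, which is the same observation).
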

\begin{proof}
First we prove the following  \\
{\bf Claim:} {\em Let $0 \leq j \leq k \leq n/2$. Let $\om \in \{0,1\}^n$ with $|\om| = k$.
Then the number of $\eta \sim \om$ which have exactly $j$ active edges and satisfy properties (a) and (b)
in Corollary \ref{cor-perm-bk} is equal to $a_{k, j}$. } \\
The proof of this claim is a rather straightforward application of elementary combinatorics
and we only give a brief sketch. Let $V \subset [n]$ be the set of indices $v$ for which $\om_v =1$.
`Constructing' an $\eta$ of the form in the claim corresponds to choosing a subset $W$ of size $j$ of $V$,
and `pairing' each $w \in W$ with an index $w' \in V^c$. (Each such pair $\{w,w'\}$ corresponds to an active
edge of $\eta$). Since $|V| = k$, there are ${k \choose j}$ ways to choose $W$. Next, for each choice of $W$
there are (since $|V^c| = n-k$) $\frac{(n-k)!}{(n-k-j)!}$ ways to assign to each $w \in W$ a $w' \in V^c$.
So the number of $\eta$'s of the form in the claim is
$${k \choose j} \frac{(n-k)!}{(n-k-j)!},$$
which indeed equals $a_{k, j}$, completing the proof of the claim.

\smallskip
Now we continue with the proof of Lemma \ref{lem-lin-sol}. Let $\nu$ be the probability distribution which
assigns to each $\eta$ probability

$$\nu(\eta) = \begin{cases} \frac{\xi_{|\eta|}}{Z}, & \mbox{ if } \eta \mbox{ satsifies (a) and (b) in
Coroll. \ref{cor-perm-bk}} \\
0, & \mbox{ otherwise, } \end{cases} $$
where we use the notation $|\eta|$ for the number of $\eta$-active edges, and $Z$ is a normalizing
constant. Now let $\om \in \{0,1\}^n$ with $|\om| = k \leq n/2$.
We have
$$\sum_{\eta \sim \om} \nu(\eta) = \sum_{j=0}^k \, \sum_{\eta: \eta \sim \om, \, |\eta| = j} \nu(\eta),$$
which, by the definition of $\nu$ and by the Claim in the beginning of this proof,
equals
$$(1/Z) \sum_{j=0}^k a_{k j} \, \xi_j,$$
which by \eqref{eq-lin-sol} is equal to $p_k/Z$ and hence to $\mu(\om)/Z$.
Further, if $|\om| \geq n/2$, then $|\bar \om| \leq n/2$, and, using the above, we get a similar result as follows:
$$\mu(\om) = \mu(\bar\om) = Z \sum_{\eta \sim \bar\om} \nu(\eta) = Z \sum_{\eta \sim \om} \nu(\eta),$$
where in the last equation we used that (with the above choice of $\nu$) $\nu$-almost every $\eta$ is
compatible with $\om$ if and only if it is compatible with $\bar \om$.
Hence $\nu$ is indeed the base of an RCR for $\mu$. From the definition of $\nu$ it is trivial that $\nu$-almost
every $\eta$ satisfies (a) and (b) in Corollary \ref{cor-perm-bk}. \\
This completes the proof of Lemma \ref{lem-lin-sol}.
\end{proof}

\begin{subsection}{Proof of Theorem \ref{afcw-thm}} \label{subs-afcw}
\begin{proof}
Let $M \subset [n]$ and $\al \in \{-1,+1\}^M$. Denote $|M|$ by $m$. From the definitions it follows
that the folded measure $\mu^{(\al)}$ is given by

\begin{eqnarray} \label{eq-cw-fold}
\mu^{\al}(\om) &=& \frac{1}{Z'} \, \mu(\al \circ \om) \mu(\al \circ \bar\om) \\ \nonumber
\,\,  &=& \frac{1}{Z''} \, \exp(2 J \sum_{i, j \in M^c} \om_i \om_j) \\ \nonumber
\,\, &=& \frac{1}{\tilde Z} \, \exp( - 2 J |\om| (n - m - |\om|)), \,\,\,\, \om \in \{-1,+1\}^{M^c},
\end{eqnarray}
where the first equality holds because the contributions from the external fields
for $\om$ and $\bar \om$ cancel, and the contributions from the `interaction' with
$\al$ for $\om$ and $\bar \om$ also cancel, and where we used the notation $|\om|$ for
the number of $i \in M^c$ with $\om_i = +1$. This distribution is clearly symmetric
and permutation-invariant in the sense of Lemma \ref{lem-lin-sol} (with the
`spin value' $0$ replaced by $-1$ but that is of course immaterial).
Writing $x$ for $\exp( -2 J)$ and $k$ for $|\om|$, the last expression in
\eqref{eq-cw-fold} (apart from the constant factor $1/{\tilde Z}$) becomes
$x^{k (n-m-k)}$. So, by Lemma \ref{lem-lin-sol} and Corollary \ref{cor-perm-bk}, it
is sufficient to prove the following

\begin{lem} \label{lem-cw-sol}
For each $n$ and each $x \geq 1$, the following system of linear equations
has a non-negative solution $(\xi_j, 0 \leq j \leq n/2)$:

\begin{equation} \label{px-lin-sys}
x^{k(n-k)} = \sum_j a_{k j} \xi_j, \,\,\, 0 \leq k \leq n/2,
\end{equation}
where $a_{k j}$ is given by \eqref{akj-def} if $0 \leq j \leq k \leq n/2$ and equal to
$0$ otherwise.
\end{lem}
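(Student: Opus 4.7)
The plan is to recognize the linear system \eqref{px-lin-sys} as a disguised instance of Newton's one-dimensional polynomial interpolation and then use the Hermite-Genocchi formula to read off the sign of the coefficients.

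First I would exploit the algebraic identity $(t-\ell)(n-t-\ell) = t(n-t) - \ell(n-\ell)$. Setting $u := t(n-t)$ and $u_\ell := \ell(n-\ell)$, this immediately gives
$$\binom{t}{j}\binom{n-t}{j} = \frac{1}{(j!)^2}\prod_{\ell=0}^{j-1}(u - u_\ell) = \frac{q_j(u)}{(j!)^2}, \qquad q_j(u) := \prod_{\ell=0}^{j-1}(u - u_\ell).$$
Since $a_{kj} = j!\binom{k}{j}\binom{n-k}{j}$, substituting $\beta_j := \xi_j/j!$ rewrites \eqref{px-lin-sys} as
$$x^{u_k} = \sum_{j=0}^{k} \beta_j\, q_j(u_k), \qquad 0 \le k \le \lfloor n/2 \rfloor.$$
The map $k\mapsto k(n-k)$ is strictly increasing on $\{0,\ldots,\lfloor n/2 \rfloor\}$, so the nodes $u_0<u_1<\cdots<u_{\lfloor n/2\rfloor}$ are distinct and $q_0,q_1,\ldots,q_{\lfloor n/2\rfloor}$ are exactly the Newton basis polynomials for these nodes.

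Next I would identify the unique solution. Because $q_j(u_k)=0$ for $k<j$, the system is lower-triangular, hence uniquely solvable, and its solution is precisely the list of coefficients of the Newton interpolating polynomial for the data $\{(u_k, x^{u_k})\}$. Therefore $\beta_j$ equals the $j$-th divided difference $f[u_0,\ldots,u_j]$ of $f(u):=x^u$ at these nodes. Finally I would apply the Hermite-Genocchi formula
$$f[u_0,\ldots,u_j] = \int_{\Delta^j} f^{(j)}\!\left(\sum_{\ell=0}^{j} t_\ell u_\ell\right) dt_1\cdots dt_j,$$
where $\Delta^j$ is the standard simplex and $t_0 = 1 - t_1 - \cdots - t_j$. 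Since $f^{(j)}(u) = (\ln x)^j\, x^u \ge 0$ for every $u\in\mathbb{R}$ whenever $x\ge 1$, we obtain $\beta_j\ge 0$, hence $\xi_j = j!\,\beta_j\ge 0$, as required.

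There is no real obstacle once the substitution $u = t(n-t)$ is spotted: it converts the apparently two-variable triangular system \eqref{px-lin-sys} into a one-dimensional interpolation problem, and the Hermite-Genocchi formula delivers the positivity immediately from the trivial fact that $u\mapsto x^u$ has non-negative derivatives of every order when $x\ge 1$. The cleverness of the argument is entirely concentrated in that substitution; the rest is standard.
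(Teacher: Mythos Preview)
Your argument is correct and is genuinely different from---and considerably cleaner than---the paper's proof. The key observation, that the identity $(t-\ell)(n-t-\ell)=t(n-t)-\ell(n-\ell)$ turns $a_{kj}=j!\binom{k}{j}\binom{n-k}{j}$ into $q_j(u_k)/j!$ with $u_k=k(n-k)$, is exactly right; it collapses the two-parameter triangular system into the Newton form of the interpolation of $f(u)=x^{u}$ at the strictly increasing nodes $u_0<u_1<\cdots<u_{\lfloor n/2\rfloor}$, so that $\xi_j/j!$ is the divided difference $f[u_0,\dots,u_j]$. The nonnegativity then follows at once from the mean-value theorem for divided differences (or Hermite--Genocchi), since $f^{(j)}(u)=(\ln x)^j x^u\ge 0$ for $x\ge 1$.

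The paper instead proceeds by a direct calculation: it introduces differential operators $D_r f(x)=\frac{d}{dx}\bigl(x^{-(n-2r+2)}f(x)\bigr)$, proves by induction an explicit formula for the iterate $\mathcal D^{r,j}(x)=D_r\cdots D_1(x^n\xi_j(x))$, checks that $\mathcal D^{j-1,j}(x)\ge 0$ by hand, and then works backward from $r=j-1$ down to $r=1$ using that each $\mathcal D^{r,j}(1)=0$. In effect the paper is rediscovering, in disguise, the recursive computation of divided differences and their positivity, but without ever naming the structure. Your approach buys conceptual clarity and a two-line endgame once the substitution is made; the paper's approach is entirely self-contained and avoids invoking any interpolation theory. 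Either way the substitution $u=t(n-t)$ is where the real content lies, and you have identified it cleanly.
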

\begin{proof} (of Lemma \ref{lem-cw-sol})
We start with some simple observations. First of all, since the matrix entries
$a_{k j}$ in the system of equations \eqref{px-lin-sys} are non-zero if and only if $j \leq k$,
the matrix has an inverse $(a^{(-1}_{j,k})_{0 \leq j, k \leq n/2}$ and the system of equations
has a unique solution, which we denote by $\xi_j(x), 0 \leq j \leq n/2$. \\
So we have to prove that if $x \geq 1$, then $\xi_j(x) \geq 0$ for all $j$.
From now on we restrict to $x \geq 1$. \\
Now observe that, since $a_{k 0} = 1$ for all $k$, it follows immediately that

\begin{equation}\label{obs-xi0}
\xi_0(x) =1,
\end{equation}
and

\begin{eqnarray}\label{obs-xij}
\xi_j(1) = \begin{cases} 1, & \mbox{ if } j = 1 \\
0, & \mbox{ if } j > 1.
\end{cases}
\end{eqnarray}


\noindent
We will study the derivatives of $\xi_j(x)$ for $j \geq 1$.
First we define, for a real function $f$,

\begin{equation}\label{eq-Dr}
(D_r f)(x) = \frac{d}{d x} \left( \frac{1}{x^{n - 2 r + 2}} f(x) \right), \,\,\, r = 1, 2, \cdots .
\end{equation}

\noindent
Next we define

\begin{equation}\label{eq-Drj}
\calD^{r,j}(x) = D_r(D_{r-1}(\cdots (D_1(x^n \xi_j(x))) \cdots)).
\end{equation}

\noindent
The key ingredient of the proof of Lemma \ref{lem-cw-sol} is the following Claim.

\begin{claim}\label{claim-Drj}
For each $j \leq n/2$ and each $r = 1, 2, \cdots j-1$,
\begin{equation}\label{eq-claim-Drj}
\calD^{r,j}(x) = x^{(n - 2 r)} \, r! \, \sum_{k=r}^j a^{(-1)}_{j k} \, a_{k r} \, x^{(k-r) (n-k-r)}.
\end{equation}

\end{claim}
Before we prove the claim we show how it is used to prove Lemma \ref{lem-cw-sol}.
Taking $r = j-1$ in the Claim gives

$${\calD}^{j-1,j}(x) = x^{n - 2 j + 2} \, (j-1)! \left( a^{(-1)}_{j, j-1} \, a_{j-1, j-1}
+ a^{(-1)}_{j \, j} \, a_{j, j-1} \, x^{n- 2 j + 1}\right).$$
This, together, with the obvious facts that $a^{(-1)}_{j, j-1} = 1/a_{j j}$ and 
$$a^{(-1)}_{j, j-1} = - \frac{a_{j, j-1}}{a_{j j} \, a_{j-1, j-1}},$$
gives

\begin{equation}\label{eq-Dj-1}
{\calD}^{j-1,j}(x) = x^{n - 2 j + 2} \, (j-1)! \, \frac{a_{j, \, j-1}}{a_{j \, j}} \, (x^{n - 2 j + 1} - 1) \geq 0.
\end{equation}

From \eqref{eq-claim-Drj} we have
\begin{equation} \label{Dr10}
{\calD}^{r, j}(1) = r! \, \sum_{k=r}^j a^{(-1)}_{j \, k} \, a_{k \, r} = 0, \,\,\, \mbox{ for } r \neq j.
\end{equation}

Using this and the definition of $\calD$ we can now go `step by step backwards', starting from \eqref{eq-Dj-1},
as follows.
From the definition we have that

$$\frac{d}{d x} \left(\frac{\calD^{j-2, \, j}(x)}{x^{n - 2 j + 4}} \right) = {\calD}^{j-1,j}(x),$$
which by \eqref{eq-Dj-1}) is $\geq 0$. Since we also have, by \eqref{Dr10}, that 
${\calD}^{j-2, \, j}(1) = 0$, it follows that
$${\calD}^{j-2, \, j}(x) \geq 0, \,\,\, \mbox{ for all } x \geq 1.$$
Repeating this argument for $j-3$, $j-4$ etcetera, we get eventually that
$${\calD}^{1, j}(x) \geq 0, \,\,\, \mbox{ for all } x \geq 1.$$
Now recall that the l.h.s. of this last expression is, by definition, $\frac{d}{d x} \xi_j(x)$. Also recall
(see \eqref{obs-xij}) that $\xi_j(1) = 0$. Hence $\xi_j(x) \geq 0$ for all $x \geq 1$, which is the
statement of the lemma.

\smallskip
So the only thing which still has to be done is to prove Claim \ref{claim-Drj}. This is done by induction.
First note that if $r=1$ then, by the definitions \eqref{eq-Dr} and \eqref{eq-Drj}, the l.h.s. of 
\eqref{eq-claim-Drj} is just $\frac{d}{d x} \xi_j(x)$, which 
we can write as
\begin{eqnarray}\label{eq-proof-claim-step1}
\frac{d}{d x} \xi_j(x) &=& \frac{d}{d x} \left(\sum_{k = 0}^j a^{(-1)}_{j k} \, x^{k (n-k)}\right) \\ \nonumber
\,\, &=& \sum_{k=1}^j a^{(-1)}_{j k} \, k (n-k) \, x^{k(n-k) - 1} \\ \nonumber
\,\, &=& \sum_{k=1}^j a^{(-1)}_{j k} \, a_{k 1} \, x^{k(n-k) - 1} \\ \nonumber
\,\, &=& x^{n-2} \sum_{k=1}^j a^{(-1)}_{j k} \, a_{k 1} \, x^{(k-1) (n-k-1)},
\end{eqnarray} 
where the third equality uses the definition \eqref{akj-def} of $a_{k j}$.
Since the last expression in \eqref{eq-proof-claim-step1} 
is equal to the r.h.s. of \eqref{eq-claim-Drj} (for $r=1$), this shows that the Claim holds for $r = 1$.
Now suppose the Claim holds for $r-1$. We show that then it also holds for $r$: By the induction
hypothesis (and the definition \eqref{eq-Drj}), the l.h.s. of \eqref{eq-claim-Drj} can be written as
\begin{eqnarray}\label{eq-proof-claim-step2}
\,\, &\,& D_r\left(x^{n- 2 r + 2} \, (r-1)! \, \sum_{k = r-1}^j a^{(-1)}_{j k} \, a_{k, \, r-1} \, 
x^{(k-r+1)(n-k-r+1)}\right)  \\ \nonumber
\, &=& (r-1)! \, \sum_{k=r}^j a^{(-1)}_{j k} \, a_{k, \, r-1} \, (k-r+1) (n-k-r+1) \, x^{(k-r+1)(n-k-r+1) - 1} \\ \nonumber
\, &=& x^{n- 2 r} \, r! \, \sum_{k =r}^j \,  a^{(-1)}_{j k} \, a_{k r} \, x^{(k-r) (n-k-r)},
\end{eqnarray}
where the first equality follows from the definition \eqref{eq-Dr} of $D_r$, and the last from simple
manipulations and the definition 
\eqref{akj-def}. Since the last expression in  \eqref{eq-proof-claim-step2} is equal to the r.h.s. of
\eqref{eq-claim-Drj}, this completes the proof of Claim \ref{claim-Drj}. \\
As we pointed out before, this also completes the proof of Lemma \ref{lem-cw-sol}.
\end{proof}
Finally, as we explained before the statement of Lemma \ref{lem-cw-sol}, this completes the proof
of Theorem \ref{afcw-thm}.
\end{proof}
\end{subsection}

\begin{subsection}{Proof of Theorem \ref{afcw-cubic-thm}} \label{subs-cw3-proof}
\begin{proof} The negative lattice condition can be expressed in terms
of the foldings as follows. For  $\omega, \omega' \in \Omega$, let
$M=\{i \in [n]\,\, : \, \omega_i = \omega'_i\}$ and let
$\alpha = \omega_M$. Then
$\mu(\omega)\mu(\omega')=\mu^{(\alpha)}(\omega_{M^c})$. Moreover,
if $\hat {\omega} \in \{-1, +1\}^{M^c}$ is such that $\hat {\omega}_i \equiv 1$,
then $\mu(\omega\vee \omega') \, \mu(\omega \wedge \omega')=\mu^{(\alpha)}(\hat {\omega})$.
The negative lattice condition is then equivalent to
\begin{equation} \label{equiv-nlac}
\mu^{(\alpha)}(\si) \geq \mu^{(\alpha)}(\hat {\omega})
\,\, \text{ for all } \si \in \Omega_{M^c}.
\end{equation}

For $\mu$ as in \eqref{afcw-cubic-def}, it is easy to see that in each
folding the interaction between an odd number of spins vanishes, and
that what is left is a permutation invariant model with interactions
expressed in terms of products of two spin values. More precisely,
the folding is of the form
\begin{equation} \label{nlac-afcw-cubic}
\mu^{(\alpha)}(\si)=
 \frac{\exp\left( J'\sum_{i, j \in M^c}  \si_i \si_j \right)}{Z}
=\frac{1}{Z}x^{k'(n'-k')}, \,\,\, \si \in \{-1,+1\}^{M^c},
\end{equation}
for a suitable $x$ (which depends on $\alpha$), where
$n'=|M^c|$ and $k'=|\si|$.

From \eqref{equiv-nlac} and \eqref{nlac-afcw-cubic} it follows that
$x \geq 1$.
Hence, by Lemma \eqref{lem-cw-sol}  there is a nonnegative solution to the system of equations \eqref{px-lin-sys}. This
implies that $\mu^{(\alpha)}$ satisfies the conditions of Lemma \eqref{lem-lin-sol},
so that Corollary \eqref{cor-perm-bk} can be applied to yield Theorem
\eqref{afcw-cubic-thm}.
\end{proof}

\end{subsection}
\end{section}

\end{document}